\theoremstyle{plain}
\newtheorem{theorem}{Theorem}[section]
\newtheorem{ex}[theorem]{Example}
\newtheorem{coro}[theorem]{Corollary}
\newtheorem{lemma}[theorem]{Lemma}
\newtheorem{prop}[theorem]{Proposition}
\theoremstyle{definition}
\newtheorem{defi}[theorem]{Definition}
\theoremstyle{remark}
\newtheorem{remark}[theorem]{Remark}
\newtheorem{notation}[theorem]{Notation}
\newcommand{\adj}{\textup{adj}}
\title{Topologically simple infinite matrix groups indexed by ordered sets}
\author{João V. P. e Silva\footnote{Belo Horizonte, Minas Gerais, Brazil - https://orcid.org/my-orcid?orcid=0000-0002-1441-1930 }\\ email: \href{mailto:joaovitorps@outlook.com}{joaovitorps@outlook.com}}
\date{}
\begin{document}

\maketitle

\begin{abstract}
\noindent This article focuses on the study of the group of units of incidence rings, which is a class of infinite matrix groups indexed by ordered sets,  on a topological perspective. We first show when these groups can inherit the topological structure from the incidence rings. It is later shown that infinite matrix groups of topological fields can be used to build simple topological matrix groups, generalizing a result proven in ``Topologically simple, totally disconnected, locally compact infinite matrix groups''. We finish by relating the structure of these groups with elementary totally disconnected, locally compact groups, an important class for the study of totally disconnected, locally compact groups.\\
\indent \textbf{Keywords:} topology, group theory, topological groups, simple topological groups, infinite matrix groups.\\
\indent The paper contains 7081 words.
\end{abstract}

\section{Introduction}


The study of infinite matrices traces back to Poincaré in the discussion of the well known Hill's equation. In \cite{history_infinite_matrices}, Bernkopf gives an introduction to the history of the study of infinite matrices and their importance for operator theory. \\
\indent At the present paper we focus on groups of units of incidence rings, a class of rings described using locally finite ordered sets, that is,\ the sets $[x,y]=\{z;\ x\leqslant z\leqslant y\}$ are finite for every $x,y$ in the ordered set, first defined in \cite{Matrices1970}. \\ 
\indent A generalization of the construction of matrix groups and rings from \cite{simple_matrices} is given for ordered sets that are not $\mathbb{Z}$-like \cite[Definition 2.1]{simple_matrices}. We also generalize the construction of non-compactly generated, non-discrete, simple groups \cite[Theorem 5.2]{simple_matrices} for a bigger class of groups that are not necessarily totally disconnected, nor locally compact, nor second countable.\\
\indent Groups similar to the ones studied here have been described previously. The groups denoted here as $\textup{aGL}_\mathbf{N}(\mathbb{F}_q)$,\ for $\mathbf{N}$ the set of naturals with usual order and $\mathbb{F}_q$ the finite field with $q$ elements, was first built at \cite{first_almost_upper_triang} as the group $\textup{GLB}$. Some first study on the representation of such groups was done at \cite{representation_glb}.\\
\indent Study of the group $\textup{GLB}$ and similarly defined groups over more general rings are also described in \cite{Hou2017CommutatorSO,GUPTA20124279,GUPTA201585,10.2307/2699674}.\\
\indent We also prove sufficient conditions for the group of units of an incidence ring to be quasi-discrete or solvable. It then follows that if $G$ contains a matrix group with such property as an open, compact subgroup, then $G$ is always an elementary topological group in the Wesolek sense \cite{elementary_first}.\\
\indent All rings in this article are assumed to be associative with identity. Rings are assumed to be non-trivial. Topological rings and groups are always Hausdorff spaces.

\subsection{Structure of the article}

Section \ref{background} of this article will focus on giving the necessary background for the definitions and results from the article. In Subsection \ref{ordered_sets} some definitions for ordered sets are given. These will be central for the definition of the incidence rings, their groups of units,\ and to prove results for such classes of objects. Subsection 2.2 gives some results from category theory that will be used in a later section.\\
\indent Section \ref{groups} focuses on the groups of units of an incidence ring. A description of such groups is given, and a generalization of the construction of $\textup{GLB}$ from \cite{first_almost_upper_triang} and $\textup{AU}_{\Lambda}(\mathbf{F}_q)$ from \cite{simple_matrices} is also given. The main focus of the section is generalizing the construction of simple groups as in \cite{simple_matrices} and showing it can be done for an arbitrary topological field. We also show properties of topological groups that are locally the group of units of incidence rings of ``not too big'' preordered sets.\\


\section{Background}\label{background}

\subsection{Ordered sets}\label{ordered_sets}

\indent For the context of our rings and groups,\ a definition that proves necessary is the following:

\begin{defi}
Let $(\Lambda,\  \preceq)$ be a proset (preordered set). Given $s_1  \preceq s_2 \in \Lambda$ we define $[s_1,\ s_2]:=\{s\in \Lambda:\ s_1  \preceq s  \preceq s_2\}$. We will call these subsets of $\Lambda$ \textbf{intervals}. We say that the proset is \textbf{locally finite} if for every $s_1  \preceq s_2\in S$ the set $[s_1,\ s_2]_\preceq$ is finite.
\end{defi}
\indent This locally finite type of property allows us to define multiplication and addition on these rings in a similar way to the finite-dimensional matrix rings.\\
\indent As the central object of this text is algebraic structures defined in relation to an ordered set,\ defining some subsets and relations in the ordered sets will be essential for simplifying the proofs. This subsection will focus on giving these basic definitions and providing some results about the structure of ordered sets and their subsets.
\begin{notation}
Let $\Lambda$ be a proset and $s_1\preceq s_2\in \Lambda$. We denote:
\begin{itemize}
    \item $s_1\precnsim s_2$ if $s_1 \preceq s_2$ and $s_1\nsim s_2$.
    \item $s_1\precnapprox s_2$ if $s_1 \preceq s_2$ and $s_1\neq s_2$.
    \item $s_1\npreceq s_2$ if it is not the case $s_1\preceq s_2$.
\end{itemize}
\end{notation}

\indent Notice that for $\Lambda$ a proset the set $[s,\ s]$ is the set of all elements equivalent to $s$ under the preorder. Hence it is not always the case $[s,\ s]=\{s\}$. Notice that if for every $s\in \Lambda$,\ $[s,\ s]=\{s\}$ then $\Lambda$ is a poset (partially ordered set).\\

\begin{defi}\label{defi:general poset one} 
Given a proset $\Lambda$,\ we define the following:
\begin{itemize}
    \item Given $s\in \Lambda$,\ we define the \textbf{neighbourhoods} of $s$ as:
    $$\mathcal{N}_{n}(s):=\left\{\begin{array}{ll}
       [s,\ s]  & \mbox{for }n=0 \\
       \{t\in \Lambda:\ t  \preceq s \mbox{ or }s  \preceq t\} & \mbox{for }n=1 \\
    \bigcup_{t\in\mathcal{N}_{n-1}(s)}\mathcal{N}_1(t) & \mbox{for } n>1
    \end{array}\right.$$
    The \textbf{full collection of neighbours} $s$ is defined as $\mathcal{N}_\omega(s)=\bigcup_{n\in\mathbb{N}}\mathcal{N}_n(s)$.
    \item We say $s_1,\ s_2\in \Lambda$ are \textbf{independent} if $s_1\notin \mathcal{N}_{\omega}(s_2)$.
    \item A subset $\Lambda'\subset \Lambda$ is said to be \textbf{closed under intervals} if for every $s_1  \preceq s_2\in 
    \Lambda'$ then $[s_1,\ s_2]\subset \Lambda'$.
    \item Given $\Lambda'\subset \Lambda$ a subset,\ we say $\Lambda'$ is \textbf{convex} if it is closed under intervals and for every $s_1,\ s_2\in \Lambda$ there are $t_1,\ t_2,\ \ldots,\ t_n\in \Lambda'$ such that $t_1\in \mathcal{N}_1(s_1)$,\ $t_{i+1}\in \mathcal{N}_1(t_i)$,\ $s_2\in \mathcal{N}_1(t_n)$ for $1\leqslant i\leqslant n-1$. In other words,\ there is a path of intervals inside $\Lambda'$ connecting $s_1$ and $s_2$.
    \item Given $\{\Lambda_i\}_{i\in I}$ a collection such that for all $i\in I$,\ $\Lambda_i\subset \Lambda$ is convex in $\Lambda$ and for $i\neq j\in I$ then $\Lambda_i\bigcap \Lambda_j=\emptyset$,\ then $\{\Lambda_i\}_{i\in I}$ is called a \textbf{locally convex collection} of $\Lambda$. 
    \item We denote by $\Gamma(\Lambda)$ the \textbf{set containing all finite, locally convex collections of $\Lambda$}.
\end{itemize}
\end{defi}


\begin{defi}
Let $\Lambda$ be a proset and $s\in\Lambda$. We say that:
\begin{itemize}
    \item $s$ is a \textbf{maximal element} if given $t\in\Lambda$ is such that $s  \preceq t$ then $s=t$.
    \item $s$ is a \textbf{minimal element} if given $t\in\Lambda$ is such that $t  \preceq s$ then $s=t$.
    \item $s$ is the \textbf{maximum element} if for all $t\in \Lambda$ we have $t  \preceq s$.
    \item $s$ is the \textbf{minimum element} if for all $t\in \Lambda$ we have $s  \preceq t$.
\end{itemize} 
\end{defi}

The following examples illustrate the definition of intervals,\ neighbourhoods, and convex subsets as denoted above.

\begin{ex}\label{ex:basic posets}\label{ex:omega complexity degree}
\begin{itemize}
\item $\mathbf{Q}$: The rational numbers with usual order is a partially ordered sets that is not locally finite.
\item $\mathbf{Zig}$: This is the poset with elements in the integers and order given by $2k>2k+1$ and $2k> 2k-1$,\ for $k$ an integer.\\
$$\begin{tikzcd}
       & -2 \arrow[ld] \arrow[rd] &    & 0 \arrow[ld] \arrow[rd] &   & 2 \arrow[ld] \arrow[rd] &   & \ldots \arrow[ld] \\
\ldots &                          & -1 &                         & 1 &                         & 3 &                  
\end{tikzcd}$$
The only intervals for $\textbf{Zig}$ are of the form $[n,\ n]=\{n\}$,\ $[2n,\ 2n-1]=\{2n,\ 2n-1\}$ and $[2n,\ 2n+1]=\{2n,\ 2n+1\}$ for some $n\in\mathbb{Z}$. In this case,\ $\mathcal{N}_0(0)=\{0\}$,\ $\mathcal{N}_n(0)=\{i\}_{-n\leqslant i\leqslant n}$ and,\ because no two elements are independent,\ $\mathcal{N}_\omega(0)=\mathbf{Zig}$. A subset $S\subset \mathbf{Zig}$ is convex if,\ and only if,\ there are $n_1,\ n_2\in\mathbf{Z}\bigcup\{\infty,\ -\infty\}$ such that $S=\{i\}_{n_1< i < n_2}$. This poset has no independent element. Every element is either maximal or minimal,\ but it doesn't have any maximum or minimum element.

\item $\mathbf{N}^*_d$: The non-zero natural numbers with order given by $a\leqslant b$ if $b=ak$ for some $k$ natural number.\\
Order on the first $6$ natural numbers.
$$\begin{tikzcd}
1 & 2 \arrow[l]   & 4 \arrow[l]            \\
  & 3 \arrow[lu]  & 6 \arrow[l] \arrow[lu] \\
  & 5 \arrow[luu] &                       
\end{tikzcd}$$
The intervals of this poset can be described using arithmetic properties. For example $[\leqslant 30]=\{1,\ 2,\ 3,\ 5,\ 6,\ 10,\ 15,\ 30\}_{\mathbf{N}_d^*}$ (the divisors of $30$) and $[30\leqslant]_{\mathbf{N}_d^*}=\{30n\}_{n\in\mathbf{N}_d^*}$ (multiples of $30$). The interval $[3,\ 30]=\{3,\ 6,\ 15,\ 30\}$,\ which are all multiples of $3$ that divide $30$. Observe that $\mathcal{N}_1(1)=\mathbf{N}_d^*$ but if $n\neq 1$ then $\mathcal{N}_1(n)$ is the set of all multiples and divisors of $n$,\ but not all $\mathbf{N}^*$. But,\ as $1$ is a divisor of all natural numbers,\ $\mathcal{N}_2(n)=\mathbf{N}_d^*$ for all $n\in \mathbf{N}_d^*$. The element $1$ is the minimum, but there is no maximal element.
\end{itemize} 
\end{ex}

\indent We will also denote some commonly used ordered sets as follows:

\begin{itemize}
\item $\mathbf{n<}$: The poset with elements $\{0,\ 1,\ \ldots,\ n-1\}$ and order given by $0< 1< \ldots< n-1$.

\item $S$: the poset with elements in the set $S$ and order given by equality. This is the poset with elements on $S$ such that all elements are independent.\\
Example: $S=\{0,\ 1,\ 2\}$.
$$\begin{tikzcd}
0 \arrow[loop, distance=2em, in=305, out=235] & 1 \arrow[loop, distance=2em, in=305, out=235] & 2 \arrow[loop, distance=2em, in=305, out=235]
\end{tikzcd}$$

\item $\overline{S}$: the proset with elements in the set $S$ and order given by $i  \preceq j$ for all $i,j$. In other words,\ it is the proset where all elements are equivalent. In the case the set is $\{0,\ 1,\ 2,\ \ldots,\ n-1\}$ we will just denote it as $\textbf{n}$.\\
Example: $\textbf{3}$
$$\begin{tikzcd}
                                                             & 1 \arrow[rdd, bend right] \arrow[ldd, bend right] &                                                  \\
                                                             &                                                   &                                                  \\
0 \arrow[rr, bend right] \arrow[ruu, bend right, shift left] &                                                   & 2 \arrow[ll, bend right] \arrow[luu, bend right]
\end{tikzcd}$$

\item $\mathbf{N}$: The natural numbers with the usual order,\ that is,\ $0< 1< 2< \ldots < n < \ldots$.

\item  $\mathbf{Z}$: The integers with the usual order,\ that is,\ $\ldots < -1< 0 < 1 < \ldots.$
\end{itemize}

\begin{defi}\label{defi:disjoint union prosets}
Let $(\Lambda,\  \preceq)$ a proset. If there is a collection $\{\Lambda_i\}_{i\in I}$ of convex subsets of $\Lambda$ such that $\Lambda=\bigcup_{i\in I}\Lambda_i$,\ for every $i\neq j$ we have $\Lambda_i\bigcap \Lambda_j=\emptyset$ and for every $s_i\in \Lambda_i$,\ $s_j\in \Lambda_j$ we have that if $i\neq j$ then $s_i$ and $s_j$ are independent,\ we say $\Lambda$ is the \textbf{internal disjoint union} of $\{\Lambda_i\}_{i\in I}$. We denote it by $\Lambda=\bigsqcup_{i\in I} \Lambda_i$.\\
\indent If $\Lambda$ cannot be written as disjoint union of two non-empty subsets we say $\Lambda$ is \textbf{irreducible}.\index{disjoint union of POSETS|ndx} \index{irreducible POSET|ndx}
\\ \indent Let $\{(\Lambda_i,\  \preceq_i)\}_{i\in I}$ a collection of prosets. We define the \textbf{external disjoint union} of this collection as the proset $(\bigsqcup_{i\in I} \Lambda_i,\  \preceq_{\sqcup})$ with elements in $\bigsqcup_{i\in I}\Lambda_i$ and order $  \preceq_\sqcup$ given by $s_1  \preceq_\sqcup s_2\in \bigsqcup_{i\in I}\Lambda_i$ if,\ and only if,\ there is $i\in I$ such that $s_1  \preceq_i s_2\in \Lambda_i$.
\end{defi}

For the ones familiar with category theory,\ the disjoint union is the coproduct on the category of prosets. Given the posets $\mathbf{2}\! <=\{0,\ 1\}$ and $\mathbf{3}\!<=\{0',\ 1',\ 2'\}$ then $\mathbf{2\!<}\!\sqcup\! \mathbf{3\!<}$ is the  poset with elements $\{0,\ 1,\ 0',\ 1',\ 2'\}$ and Hasse diagram as follows:
$$
\begin{tikzcd}
1 \arrow[dd] & 2' \arrow[d]   \\
 & 1' \arrow[d]             \\
0           & 0'           
\end{tikzcd}
$$

\indent The main use of disjoint union is breaking down ordered sets into irreducible parts,\ as shown below:

\begin{prop}\label{prop:making prosets into disjoint parts}
Let $\Lambda$ be a non-empty preordered set. Then there is a collection $\{\Lambda_i\}_{i\in I}$ of irreducible subsets of $\Lambda$ such that $\Lambda=\bigsqcup_{i\in I} \Lambda_i$.
\end{prop}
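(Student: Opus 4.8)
The plan is to partition $\Lambda$ into the connected components of its comparability relation, which turn out to be exactly the sets $\mathcal{N}_\omega(s)$. Concretely, I would first define a relation $\approx$ on $\Lambda$ by declaring $s\approx t$ whenever $t\in\mathcal{N}_\omega(s)$, and show it is an equivalence relation. Reflexivity is immediate since $s\in[s,\ s]=\mathcal{N}_0(s)\subseteq\mathcal{N}_\omega(s)$. Symmetry follows from the symmetry of the first neighbourhood, namely $t\in\mathcal{N}_1(s)\iff(s\preceq t\text{ or }t\preceq s)\iff s\in\mathcal{N}_1(t)$, which propagates through the inductive definition of $\mathcal{N}_n$. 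The only step requiring genuine care is transitivity.

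For transitivity I would establish the concatenation estimate $\mathcal{N}_m(\mathcal{N}_n(s))\subseteq\mathcal{N}_{m+n}(s)$ by induction on $m$, where $\mathcal{N}_m(X):=\bigcup_{x\in X}\mathcal{N}_m(x)$; the base case uses the identity $\mathcal{N}_{k+1}(X)=\mathcal{N}_1(\mathcal{N}_k(X))$ together with transitivity of $\preceq$, and the inductive step uses monotonicity of $\mathcal{N}_1$. This says that a path of length $n$ followed by a path of length $m$ is a path of length $m+n$. Granting it, if $t\in\mathcal{N}_n(s)$ and $u\in\mathcal{N}_m(t)$ then $u\in\mathcal{N}_{m+n}(s)\subseteq\mathcal{N}_\omega(s)$, so $s\approx t$ and $t\approx u$ imply $s\approx u$. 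Taking $\{\Lambda_i\}_{i\in I}$ to be the set of equivalence classes of $\approx$, the classes are pairwise disjoint and cover $\Lambda$, giving $\Lambda=\bigcup_i\Lambda_i$ and $\Lambda_i\cap\Lambda_j=\emptyset$ for $i\neq j$. Moreover, if $s_i\in\Lambda_i$ and $s_j\in\Lambda_j$ with $i\neq j$, then $s_j\notin\mathcal{N}_\omega(s_i)$, i.e. $s_i$ and $s_j$ are independent, which is exactly the independence clause in the definition of internal disjoint union.

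It remains to verify that each $\Lambda_i$ is convex and irreducible. For closure under intervals, if $s_1\preceq s_2$ lie in $\Lambda_i=\mathcal{N}_\omega(s_1)$ and $z\in[s_1,\ s_2]$, then $s_1\preceq z$ gives $z\in\mathcal{N}_1(s_1)\subseteq\mathcal{N}_\omega(s_1)=\Lambda_i$. For the path clause, any two elements of a single class lie in one set $\mathcal{N}_\omega(s_1)$, so there is a neighbour path joining them, and since every vertex of such a path is again $\approx$-equivalent to $s_1$ the whole path stays inside $\Lambda_i$. Finally, for irreducibility, suppose $\Lambda_i=A\sqcup B$ were an internal disjoint union into two non-empty pieces; choosing $a\in A$ and $b\in B$ would force $a,b$ independent, contradicting $b\in\Lambda_i=\mathcal{N}_\omega(a)$. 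Hence each $\Lambda_i$ is irreducible. I expect the concatenation lemma behind transitivity to be the only real obstacle; the remaining verifications are direct consequences of the ``same component'' description of the classes, the one point demanding attention being that the connecting path in the convexity clause stays inside the class, which is automatic because the class is $\mathcal{N}_\omega$-closed.
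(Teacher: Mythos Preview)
Your proof is correct and follows essentially the same idea as the paper: both identify the components as the sets $\mathcal{N}_\omega(s)$ and partition $\Lambda$ accordingly. The only notable difference is in the indexing: the paper invokes Zorn's Lemma to produce a maximal set $I\subset\Lambda$ of pairwise independent elements and sets $\Lambda_i=\mathcal{N}_\omega(i)$, whereas you take the equivalence classes of the relation $s\approx t\iff t\in\mathcal{N}_\omega(s)$ directly. Your route is slightly more elementary (no appeal to choice is needed to form the set of classes) and you spell out the verifications of convexity and irreducibility that the paper merely asserts; conversely, the paper's formulation makes the ``one representative per component'' picture explicit.
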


\begin{proof}
Notice that for every $s\in \Lambda$, every element $t$ on the set $\mathcal{N}_{\omega}(s)$ is independent from every element $t'$ on the set $\Lambda\backslash \mathcal{N}_{\omega}(s)$. It is also the case that for every $s\in\Lambda$, the set $\mathcal{N}_{\omega}(s)$ is irreducible.\\
\indent Now let $I\subset \Lambda$ a maximal subset of pairwise independent elements of $\Lambda$ (such set exists by Zorn's Lemma). Define for each $i\in I$ the irreducible subset $\Lambda_i:=\mathcal{N}_{\omega}(i)$. As the set $I$ is maximal, the observation above implies 
$$\Lambda=\bigsqcup_{i\in I}\Lambda_i.$$
\end{proof}

We call the irreducible subsets $\Lambda_i=\mathcal{N}_{\omega}(i)$ the \textbf{components} of $\Lambda$.

\begin{defi}
Let $f:\Lambda_1\rightarrow \Lambda_2$ a function between preordered sets. We say $f$ is an \textbf{bijective order preserving map} if it is a set bijection,\ and for every $s_1,\ s_2\in \Lambda_1$ we have that $s_1\preceq s_2$ if,\ and only if,\ $f(s_1)\preceq f(s_2)$.
\end{defi}

\begin{defi}\label{irreducible partial suborder}
Let $\Lambda$ be a set and $  \preceq_1$,\ $  \preceq_2$ two preorders on the set $\Lambda$. We say $  \preceq_1$ is a \textbf{suborder} of $  \preceq_2$ if $s_1  \preceq_1 s_2$ implies $s_1   \preceq_2 s_2$. If $  \preceq_1$ is a partial order we say it is a \textbf{partial suborder} of $  \preceq_2$. If $  \preceq_1$ gives an irreducible partial order we say it is an \textbf{irreducible partial suborder} of $  \preceq_2$.\\
    \indent Given a collection $\{  \preceq_i\}_{i\in I}$ of preorders of $\Lambda$,\ we define their \textbf{transitive closure} as the minimal preorder $  \preceq_{\cup}:=\bigcup_{i\in I}  \preceq_i$ such that if $s_1  \preceq_i s_2$ for some $i\in I$ then $s_1  \preceq_\cup s_2$.
\end{defi}

The following are useful to describe a proset $\Lambda$ in relation to a proset $\Lambda'$,\ with same elements as $\Lambda$,\ and a collection of finite disjoint subsets. These definitions and results are essential for building the simple topological groups in Subsection \ref{subsection:simple groups}.

\begin{defi}
Given proset $\Lambda$ and,\ for some indexing set $I$,\ $\{S_i\}_{i\in I}$ a collection of disjoint subsets of $\Lambda$,\ we define $(\Lambda+\sum_{i\in I} S_i,\  \preceq_{+})$ the proset with order given by:
$$s_1  \preceq_{+} s_2 \textit{ if }
\left\{
	\begin{array}{l}
		s_1  \preceq s_2 \text{ in }\Lambda \text{,\ or}\\
		\exists i_1,\ i_2, \ldots, i_n\in I,\ \exists t_{i_k},\ q_{i_k} \in S_{i_k} \textit{ such that for }1\leqslant k \leqslant n,\ s_1  \preceq t_{i_1},\\ t_{i_k}\preceq q_{i_k} \textit{ and } q_{i_n}  \preceq s_2.
 	\end{array}
\right.$$
\end{defi}
Notice that under such an order, for each $i\in I$ all elements in $S_i$ are equivalent. This is the minimal order containing $\preceq$ as a suborder and with such property.

\begin{ex}\label{ex:omega complexity proset}
Let $\Lambda=\mathbf{Zig}$ as in Example \ref{ex:omega complexity degree} and let $S_1=\{a_0,\ a_3\}$,\ $S_2=\{a_2,\ a_4\}$. Then the proset $(\Lambda+(S_1+S_2),\  \preceq_+)$ has the following order:
\begin{center}
\begin{tikzcd}
       & a_{-2} \arrow[rd] \arrow[ld] &        & a_0 \arrow[ld] \arrow[rd, dashed, shift right] \arrow[d] & a_2 \arrow[d] \arrow[ld] \arrow[r, dashed, shift left] & a_4 \arrow[ld] \arrow[rd] \arrow[l, dashed, shift left] &     & \cdots \arrow[ld] \\
\cdots &                              & a_{-1} & a_1                                                      & a_3 \arrow[lu, dashed, shift right]                    &                                                         & a_5 &         0         
\end{tikzcd}
\end{center}
where the dashed arrows are the new relations.
\end{ex}

\begin{prop}\label{prop:every proset has poset}
Let $(\Lambda,\  \preceq)$ be a proset. There are $\{S_i\}_{i\in I}$ disjoint subsets of $\Lambda$ and $\leqslant$ a partial suborder of $  \preceq$ such that $(\Lambda,\  \preceq)\cong (\Lambda+\sum_{i\in I}S_i,\ \leqslant_+)$.
\end{prop}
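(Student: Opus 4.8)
The plan is to realize $\preceq$ as the result of first passing to a genuine partial order by discarding the two-way (equivalence) relations, and then reintroducing exactly those relations through the $+$ construction applied to the $\sim$-equivalence classes. Write $s_1 \sim s_2$ when $s_1 \preceq s_2$ and $s_2 \preceq s_1$; this is an equivalence relation whose classes are precisely the intervals $[s,\ s]$. I would take $\{S_i\}_{i\in I}$ to be the family of these equivalence classes (one may discard the singletons, though keeping them is harmless since merging a one-element set adds nothing), which is a disjoint family of subsets of $\Lambda$.

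Next I would define a relation $\leqslant$ on $\Lambda$ by declaring $s_1 \leqslant s_2$ precisely when $s_1 \preceq s_2$ and either $s_1 = s_2$ or $s_2 \npreceq s_1$. In words, $\leqslant$ keeps every $\preceq$-relation except those linking two distinct $\sim$-equivalent elements, together with reflexivity; that $\leqslant$ is a suborder of $\preceq$ is then immediate. I would check that $\leqslant$ is a partial order: reflexivity is built in; antisymmetry holds because $s_1 \leqslant s_2$ and $s_2 \leqslant s_1$ with $s_1 \neq s_2$ would force both $s_2 \npreceq s_1$ and $s_2 \preceq s_1$; and transitivity is a short case analysis showing that $s_1 \leqslant s_2 \leqslant s_3$ (all distinct) yields $s_1 \preceq s_3$ together with $s_1 \neq s_3$ and $s_3 \npreceq s_1$, since any failure would, by transitivity of $\preceq$, contradict one of the strictness hypotheses. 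I would also record that within each class $S_i$ the order $\leqslant$ leaves all distinct elements mutually incomparable.

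The heart of the argument is to prove $\leqslant_+ = \preceq$ as relations on $\Lambda$, so that the identity map is the required bijective order-preserving isomorphism. For $\preceq\, \subseteq\, \leqslant_+$: if $s_1 \preceq s_2$ with $s_1 \sim s_2$, then $s_1$ and $s_2$ lie in a common $S_i$ and are therefore equivalent under $\leqslant_+$; while if $s_1 \preceq s_2$ with $s_1 \nsim s_2$, then $s_2 \npreceq s_1$, whence $s_1 \leqslant s_2$ and so $s_1 \leqslant_+ s_2$. For the reverse inclusion I would invoke the minimality characterization of the $+$ construction stated just after its definition: the preorder $\preceq$ contains $\leqslant$ and, in $\preceq$, all elements of each class $S_i$ are equivalent, so by minimality of $\leqslant_+$ we get $\leqslant_+\, \subseteq\, \preceq$.

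The step I expect to require the most care is the transitivity of $\leqslant$, since it is exactly there that one must show the discarded two-way relations do not sneak back in. A secondary point to state explicitly is that the $+$ construction is being applied to the poset $(\Lambda,\ \leqslant)$ rather than to $(\Lambda,\ \preceq)$, so that the minimality appealed to in the final inclusion is the correct one; everything else is routine bookkeeping with the definitions.
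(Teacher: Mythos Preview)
Your proof is correct and follows essentially the same approach as the paper: take the $S_i$ to be the $\sim$-equivalence classes $\mathcal{N}_0(s)$ and strip the two-way relations from $\preceq$ to obtain a partial suborder $\leqslant$, then recover $\preceq$ as $\leqslant_+$. The only difference is that the paper additionally (and unnecessarily) imposes a total order on each class $\mathcal{N}_0(s)$, whereas you leave distinct $\sim$-equivalent elements $\leqslant$-incomparable; your choice is slightly cleaner, and the minimality argument for $\leqslant_+\,=\,\preceq$ goes through identically.
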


\begin{proof}
First we need to get a partial order $\leqslant$ on $\Lambda$ in such a way we can add equivalence relations and get our proset structure. Let $s_1,\ s_2\in \Lambda$. If $s_1=s_2$ or $s_1 \precnsim s_2$,\ define $s_1\leqslant s_2$. For each $s\in \Lambda$,\ define $\leqslant$ restricted to $\mathcal{N}_0(s)$ as a total order,\ that is,\ for every $s_1,\ s_2\in \mathcal{N}_0(s)$ either $s_1\leqslant s_2$ or $s_2\leqslant s_1$. By definition of $(\Lambda,\ \leqslant)$,\ it is then the case that $(\Lambda,\ \preceq)\cong (\Lambda+\sum_{s\in \Lambda}\mathcal{N}_0(s),\ \leqslant_+)$
\end{proof}

\begin{coro}\label{prop:adding finite sets hence finite interval}
If $\Lambda$ is a locally proset,\ then for every finite subset $S'\subset \Lambda$ the proset $(\Lambda+S',\  \preceq_+)$ is locally finite. 
\end{coro}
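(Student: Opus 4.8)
The plan is to first unwind the definition of $\preceq_+$ in the special case of a single finite set $S'$ (so that the indexing set is a singleton) and thereby reduce local finiteness of $(\Lambda+S',\ \preceq_+)$ to the local finiteness of $\Lambda$ together with the finiteness of $S'$; I read the hypothesis as ``$\Lambda$ is a locally finite proset''. Write $\mathcal{D}:=\{x\in\Lambda:\ x\preceq s\text{ for some }s\in S'\}$ and $\mathcal{U}:=\{x\in\Lambda:\ s\preceq x\text{ for some }s\in S'\}$ for the down- and up-shadows of $S'$ in $\Lambda$.

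The crucial first step is to establish the characterization
$$a\preceq_+ b \iff a\preceq b \ \text{ or }\ (a\in\mathcal{D}\text{ and }b\in\mathcal{U}).$$
The forward direction is the substantive one. An arbitrary $\preceq_+$-relation is witnessed by a finite chain alternating $\preceq$-steps with equivalence-steps that stay inside $S'$. Since all elements of $S'$ are collapsed into a \emph{single} $\preceq_+$-equivalence class, I would argue that any such chain short-circuits: the first $\preceq$-step entering $S'$ and the last $\preceq$-step leaving $S'$ can be joined directly through that one class, so that $a\preceq s$ and $s'\preceq b$ for some $s,s'\in S'$, i.e.\ $a\in\mathcal{D}$ and $b\in\mathcal{U}$. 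The reverse direction is immediate, since any two elements of $S'$ are $\preceq_+$-equivalent.

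With the characterization in hand, fix $a\preceq_+ b$ and take any $c$ with $a\preceq_+ c\preceq_+ b$. Splitting according to how each of the two relations $a\preceq_+ c$ and $c\preceq_+ b$ is realized yields four cases, and in each one $c$ lies in a $\preceq$-interval whose endpoints belong to $\{a,b\}\cup S'$; collecting them gives
$$[a,b]_{\preceq_+}\ \subseteq\ [a,b]_{\preceq}\cup\bigcup_{s\in S'}[a,s]_{\preceq}\cup\bigcup_{s\in S'}[s,b]_{\preceq}\cup\bigcup_{s,s'\in S'}[s,s']_{\preceq}.$$
Every set on the right is a $\preceq$-interval of $\Lambda$, hence finite by local finiteness of $\Lambda$, and there are at most $1+2|S'|+|S'|^2$ of them; as $S'$ is finite this exhibits $[a,b]_{\preceq_+}$ as a finite set, which is precisely local finiteness of $(\Lambda+S',\ \preceq_+)$.

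The main obstacle is the forward implication of the characterization: one must verify that passing to longer alternating chains through $S'$ creates no comparabilities beyond the ``one step in, one step out'' pattern. This is exactly where both the finiteness of $S'$ and the fact that $S'$ becomes a single equivalence class are used; once it is secured, the interval inclusion and the counting are routine.
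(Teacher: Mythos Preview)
The paper states this corollary without proof, presumably regarding it as an immediate consequence of unwinding the definition of $\preceq_+$ in the single-set case. Your argument is correct and supplies precisely those details: the characterization $a\preceq_+ b \iff a\preceq b \text{ or } (a\in\mathcal{D}\text{ and }b\in\mathcal{U})$ is exactly what the paper's remark ``this is the minimal order containing $\preceq$ as a suborder and with [all elements of $S'$ equivalent]'' amounts to when $I$ is a singleton, and your four-case inclusion then bounds $[a,b]_{\preceq_+}$ by finitely many $\preceq$-intervals with endpoints in $\{a,b\}\cup S'$.

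One small correction to your commentary: the forward implication of the characterization does not use the finiteness of $S'$ at all --- it uses only that $S'$ is a \emph{single} set, so that any alternating chain collapses to one entry point and one exit point. Finiteness of $S'$ is needed only at the very end, to ensure that the union $\bigcup_{s\in S'}[a,s]_\preceq\cup\bigcup_{s\in S'}[s,b]_\preceq\cup\bigcup_{s,s'\in S'}[s,s']_\preceq$ ranges over finitely many intervals.
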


\subsection{The units group of a 
matrix ring}

Given $P$ a ring,\ we denote the \textbf{group of units of $P$} as $P^*$. We will denote the ring of $n\times n$ matrices over $P$ by $\textup{M}_n(P)$,\ the group of $n\times n$ invertible matrices by $\textup{GL}_n(P)$,\ and the group of $n\times n$ invertible matrices with determinant $1$ as $\textup{SL}_n(P)$.

\begin{theorem}[Theorem 2.6 \cite{category_theory_notes}]
The functor that maps a ring $P$ to its group of units is right adjoint. 
\end{theorem}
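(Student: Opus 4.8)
The plan is to exhibit an explicit left adjoint to the units functor $U \colon \mathbf{Ring} \to \mathbf{Grp}$, $P \mapsto P^*$, namely the integral group ring functor $\mathbb{Z}[-] \colon \mathbf{Grp} \to \mathbf{Ring}$, and then verify that the resulting bijection on hom-sets is natural. Recall the construction: $\mathbb{Z}[G]$ is the free $\mathbb{Z}$-module on the underlying set of $G$, equipped with the multiplication determined $\mathbb{Z}$-bilinearly by the group law, with identity the neutral element $e_G$. A group homomorphism $\alpha \colon G \to H$ extends $\mathbb{Z}$-linearly to a unital ring homomorphism $\mathbb{Z}[\alpha]$, so $\mathbb{Z}[-]$ is a functor. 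The key observation is that the canonical inclusion $\eta_G \colon G \to \mathbb{Z}[G]$ lands in the units, since each $g$ has two-sided inverse $g^{-1}$; hence it factors as a group homomorphism $\eta_G \colon G \to \mathbb{Z}[G]^* = U(\mathbb{Z}[G])$.

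To establish the adjunction I would show that $\eta_G$ is a universal arrow from $G$ to $U$. Given any ring $P$ and group homomorphism $\phi \colon G \to P^*$, define $\hat\phi \colon \mathbb{Z}[G] \to P$ by $\hat\phi\bigl(\sum_g n_g\, g\bigr) = \sum_g n_g\, \phi(g)$. Additivity is immediate, and multiplicativity reduces to basis elements, where $\hat\phi(gh) = \phi(gh) = \phi(g)\phi(h) = \hat\phi(g)\hat\phi(h)$ because $\phi$ respects the group law; moreover $\hat\phi(e_G) = \phi(e_G) = 1_P$. Uniqueness is forced: any unital ring homomorphism $\mathbb{Z}[G] \to P$ restricts on the subring $\mathbb{Z}\cdot e_G$ to the unique ring map $\mathbb{Z} \to P$ (as $\mathbb{Z}$ is initial in $\mathbf{Ring}$), so it is automatically $\mathbb{Z}$-linear and thus determined by its values on the generating set $G$; the condition $U(\hat\phi)\circ\eta_G = \phi$ pins those values down. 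This says precisely that $(\mathbb{Z}[G], \eta_G)$ is an initial object of the comma category $(G \downarrow U)$.

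Finally I would invoke the standard characterisation of right adjoints: a functor is a right adjoint if and only if for every object of its source there is a universal arrow to it, equivalently every comma category $(G \downarrow U)$ admits an initial object. Having produced these uniformly for all groups $G$, I conclude that $\mathbb{Z}[-] \dashv U$, and the bijection $\mathrm{Hom}_{\mathbf{Ring}}(\mathbb{Z}[G], P) \cong \mathrm{Hom}_{\mathbf{Grp}}(G, P^*)$ is natural in both variables by construction of the universal arrows. The one point demanding care — the \emph{main obstacle} — is the uniqueness clause: it rests essentially on the standing assumption that rings have identity and ring homomorphisms are unital, which makes them $\mathbb{Z}$-linear and lets agreement on $G$ force agreement on all of $\mathbb{Z}[G]$; without unitality the argument collapses.
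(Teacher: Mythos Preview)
Your argument is correct and is precisely the standard proof: exhibit the integral group ring functor $\mathbb{Z}[-]$ as left adjoint via the universal property of $\eta_G\colon G\to\mathbb{Z}[G]^*$. There is nothing to compare against here, however, because the paper does not supply its own proof of this statement; it merely quotes the result from an external reference (Theorem~2.6 of \cite{category_theory_notes}) and uses it as background. Your write-up would serve perfectly well as the omitted justification.
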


\begin{defi}[Definition 2.1 \cite{tarizadeh2023topological}]\label{thrm:giving topology to unit group}
An \textbf{absolute topological ring} is a topological ring such that its group of units with the subspace topology is a topological groups.
\end{defi}

As topological fields are defined so the inverse map is continuous,\ every topological field is an absolute topological ring.

\begin{lemma}[§§103–105 \cite{dickson1901linear}]\label{Lemma:how to make the simple matrices}
Let $F$ be a field and let $n\in \mathbb{N}$ such that $n\geqslant 2$ ; in the case $n = 2$,\ assume $|F| > 3$. Then every proper normal subgroup $\textup{SL}_n(F)$ is central,\ and every noncentral normal subgroup of $\textup{GL}_n(F)$ contains the groups $\textup{SL}_n(F)$.
\end{lemma}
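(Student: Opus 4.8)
The plan is to deduce both assertions from the simplicity of $\textup{PSL}_n(F) = \textup{SL}_n(F)/Z(\textup{SL}_n(F))$ and then bootstrap up to $\textup{GL}_n(F)$. For the simplicity statement I would invoke Iwasawa's criterion: if a perfect group $G$ acts faithfully and primitively on a set $\Omega$, and the stabiliser $G_\alpha$ of some point has an abelian normal subgroup $A$ whose $G$-conjugates generate $G$, then $G$ is simple. I intend to apply this with $G = \textup{PSL}_n(F)$ acting on the projective space $\mathbb{P}^{n-1}(F)$ of lines in $F^n$.

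First I would record the three standing facts that feed the criterion. The transvections $I + \lambda e_{ij}$ (for $i \neq j$) generate $\textup{SL}_n(F)$, which is just Gaussian elimination read off for determinant-one matrices. Next, $\textup{SL}_n(F)$ is perfect under the stated hypotheses: for $n \geq 3$ the identity $[I + \lambda e_{ij}, I + \mu e_{jk}] = I + \lambda\mu e_{ik}$ with $i,j,k$ distinct writes every transvection as a commutator, while for $n = 2$ one instead conjugates $I + \lambda e_{12}$ by $\textup{diag}(a, a^{-1})$ to obtain $I + a^2\lambda e_{12}$, so that the commutator produces $I + (a^2 - 1)\lambda e_{12}$; here the hypothesis $|F| > 3$ is exactly what guarantees an element $a$ with $a^2 \neq 1$, so that $(a^2-1)\lambda$ sweeps out all of $F$. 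Finally, the $\textup{SL}_n$-action on $\mathbb{P}^{n-1}(F)$ is $2$-transitive, hence primitive, and its kernel is precisely the scalar matrices of determinant one, namely $Z(\textup{SL}_n(F))$, so $\textup{PSL}_n(F)$ acts faithfully and primitively.

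For the abelian normal subgroup I would fix a line $\langle v \rangle$ and let $A$ be the group of transvections with centre $\langle v \rangle$, that is, the maps $x \mapsto x + \phi(x)v$ with $\phi(v) = 0$; this is abelian, and conjugation by any $g$ stabilising $\langle v \rangle$ carries such a transvection to another of the same shape, so $A$ is normal in the stabiliser. Since $\textup{SL}_n(F)$ is transitive on lines, the $G$-conjugates of $A$ realise the transvections with every possible centre, and these generate $\textup{SL}_n(F)$ by the first fact. Iwasawa's criterion then yields that $\textup{PSL}_n(F)$ is simple, which is exactly the assertion that every proper normal subgroup of $\textup{SL}_n(F)$ is central. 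For the second statement, let $N \trianglelefteq \textup{GL}_n(F)$ be noncentral. Every commutator has determinant one, so $[\textup{GL}_n(F), N] \subseteq \textup{SL}_n(F)$, and this is a normal subgroup of $\textup{GL}_n(F)$. It cannot lie in the scalars: if it did, every element of $N$ would map to a central element of $\textup{PGL}_n(F)$, but $Z(\textup{PGL}_n(F)) = 1$, forcing $N$ into the scalars and contradicting noncentrality. Hence $[\textup{GL}_n(F), N]$ is a noncentral normal subgroup of $\textup{SL}_n(F)$, so by the first part it equals $\textup{SL}_n(F)$, giving $\textup{SL}_n(F) = [\textup{GL}_n(F), N] \subseteq N$.

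I expect the main obstacle to be the perfectness step in the degenerate case $n = 2$, where the clean three-term commutator identity is unavailable and one must instead extract enough from the diagonal torus; pinning down that $|F| > 3$ is both necessary and sufficient there is the genuinely delicate point, since $\textup{PSL}_2(\mathbb{F}_2)$ and $\textup{PSL}_2(\mathbb{F}_3)$ are not simple. The remaining inputs, generation by transvections, $2$-transitivity of the projective action, and triviality of $Z(\textup{PGL}_n(F))$, are comparatively routine.
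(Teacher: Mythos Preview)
Your proposal is correct and follows the standard modern route via Iwasawa's criterion; the argument is sound in all its parts, including the delicate $n=2$ perfectness step and the passage from $\textup{SL}_n$ to $\textup{GL}_n$ via $[\textup{GL}_n(F),N]$.

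However, there is nothing to compare against: the paper does not prove this lemma at all. It is stated as a citation to \S\S103--105 of Dickson's 1901 book \emph{Linear groups with an exposition of the Galois field theory} and used as a black box in the proof of Theorem~\ref{theorem:simple matrices}. Dickson's original argument predates Iwasawa's criterion and proceeds by a more hands-on analysis of transvections and their conjugates, so your approach is the cleaner contemporary packaging of the same underlying ideas rather than a genuinely different strategy.
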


\begin{theorem}[Theorem 4.19 \cite{matrix-results-adj}]\label{thrm:conditions invertible}
Let $P$ be a commutative ring. Then for every $A\in \textup{M}_n(P)$ we have
$$A \adj(A)=\adj(A)A=\det(A)I_n.$$
Hence,\ $A\in \textup{M}_n(P)$ is invertible if,\ and only if,\ $\det(A)$ is invertible in $P$.
\end{theorem}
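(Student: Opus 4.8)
The plan is to establish the identity $A\adj(A) = \adj(A)A = \det(A)I_n$ by directly computing the entries of the product via Laplace (cofactor) expansion, taking care that every step stays valid over a commutative ring in which we cannot divide, and then to read off the invertibility criterion as an easy corollary.

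First I would recall the Leibniz formula $\det(A) = \sum_{\sigma\in S_n}\mathrm{sgn}(\sigma)\prod_i A_{i\sigma(i)}$ together with the definition of the adjugate, $\adj(A)_{ij} = (-1)^{i+j}M_{ji}$, where $M_{ji}$ is the minor obtained by deleting row $j$ and column $i$. The diagonal entry $(A\adj(A))_{ii} = \sum_k A_{ik}(-1)^{i+k}M_{ik}$ is precisely the cofactor expansion of $\det(A)$ along row $i$, so it equals $\det(A)$. For an off-diagonal entry with $i\neq j$, the sum $(A\adj(A))_{ij} = \sum_k A_{ik}(-1)^{j+k}M_{jk}$ is the cofactor expansion along row $j$ of the matrix obtained from $A$ by overwriting row $j$ with a copy of row $i$; since that matrix has two equal rows, this term should vanish, giving $A\adj(A) = \det(A)I_n$.

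The crux, and the step I expect to be the main obstacle, is justifying that a matrix with two equal rows has zero determinant over an \emph{arbitrary} commutative ring. Over a field this is immediate from antisymmetry, but over a ring with zero divisors or of characteristic $2$ the relation $2\det = 0$ does not suffice. Instead I would argue combinatorially: if rows $p$ and $q$ coincide, pairing each $\sigma\in S_n$ with $\sigma\circ(p\,q)$ partitions $S_n$ into two-element orbits on which the two Leibniz summands are negatives of one another, since the sign flips while the equality of rows $p$ and $q$ leaves the product unchanged. Hence the terms cancel in pairs and the determinant is $0$ as an element of $P$, independent of any division.

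For the companion identity $\adj(A)A = \det(A)I_n$ I would avoid repeating the computation by transposing: the relations $\det(A^{\mathsf T}) = \det(A)$ and $\adj(A^{\mathsf T}) = \adj(A)^{\mathsf T}$, both clear from the definitions, let me apply the identity just proved to $A^{\mathsf T}$ and then transpose the result. Finally, for the invertibility criterion, if $\det(A)$ is a unit then $\det(A)^{-1}\adj(A)$ is a two-sided inverse of $A$ by the identity above; conversely, if $AB = BA = I_n$ then multiplicativity $\det(AB) = \det(A)\det(B)$ yields $\det(A)\det(B) = 1$, so $\det(A)$ is a unit. The one remaining point to nail down is multiplicativity of $\det$ over a commutative ring, which I would obtain by the universal-coefficients trick: the identity is a polynomial identity with integer coefficients in the matrix entries, so it suffices to check it over the integral domain $\mathbb{Z}[x_{ij}, y_{ij}]$, where it reduces to the classical field case inside the fraction field.
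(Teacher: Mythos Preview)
Your proof is correct and follows the standard route to this classical identity; each step---the cofactor expansion on the diagonal, the pairing $\sigma \leftrightarrow \sigma\circ(p\,q)$ to kill determinants with repeated rows over an arbitrary commutative ring, the transpose trick for $\adj(A)A$, and the universal-coefficients argument for multiplicativity of $\det$---is sound.

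However, there is nothing to compare against: the paper does not prove this statement. It appears in the background section as a citation of Theorem~4.19 from an external reference (labelled \texttt{matrix-results-adj}) and is stated without proof, to be used later as an input for Corollary~\ref{coro:matrices absolute topological}. So your write-up is not an alternative to the paper's argument but rather a self-contained supplement where the paper simply quotes the literature.
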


\begin{theorem}[Theorem 4.21 \cite{matrix-results-adj}]\label{thrm:invertible as product}
Let $P$ be a commutative ring. If $A\in \textup{GL}_n(R)$ then
$$A^{-1}=\frac{1}{\det(A)}\adj(A).$$
\end{theorem}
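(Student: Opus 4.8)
The plan is to derive the formula directly from the adjugate identity in Theorem \ref{thrm:conditions invertible}, which does all the real work. First I would observe that, since $A \in \textup{GL}_n(P)$ is invertible, the criterion in Theorem \ref{thrm:conditions invertible} guarantees that $\det(A)$ is a unit of $P$; hence the scalar $\frac{1}{\det(A)} = \det(A)^{-1}$ genuinely exists in $P$, and the right-hand side $\frac{1}{\det(A)}\adj(A)$ of the claimed identity is a well-defined matrix in $\textup{M}_n(P)$. This is the only place where the hypothesis that $A$ is invertible is actually needed.

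Next I would invoke the two-sided identity $A\adj(A) = \adj(A) A = \det(A) I_n$ supplied by Theorem \ref{thrm:conditions invertible}. Because $P$ is commutative, the scalar $\det(A)^{-1}$ is central and commutes with every matrix entry, so I may multiply the identity through by it and slide it past both $A$ and $\adj(A)$. This yields
$$A\left(\frac{1}{\det(A)}\adj(A)\right) = \left(\frac{1}{\det(A)}\adj(A)\right) A = I_n,$$
which exhibits $\frac{1}{\det(A)}\adj(A)$ as a two-sided inverse of $A$ in the matrix ring $\textup{M}_n(P)$.

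Finally I would appeal to uniqueness of inverses: in any associative ring with identity, a two-sided inverse of a given element is unique. Since $A$ is assumed invertible, $A^{-1}$ is its unique inverse, so the matrix $\frac{1}{\det(A)}\adj(A)$ produced in the previous step must coincide with $A^{-1}$, which is exactly the stated formula.

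The argument is essentially mechanical once Theorem \ref{thrm:conditions invertible} is available, so there is no serious obstacle. The only point demanding a little care is the bookkeeping around the scalar $\det(A)^{-1}$: one must use commutativity of $P$ to treat it as a central element, so that $\frac{1}{\det(A)}\adj(A)$ legitimately serves as both a left and a right inverse, and one must first confirm through Theorem \ref{thrm:conditions invertible} that this scalar exists at all before the symbol $\frac{1}{\det(A)}$ is even meaningful.
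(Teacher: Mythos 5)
Your argument is correct: invertibility of $\det(A)$ comes from Theorem \ref{thrm:conditions invertible}, and multiplying the two-sided identity $A\adj(A)=\adj(A)A=\det(A)I_n$ by the central scalar $\det(A)^{-1}$ exhibits $\frac{1}{\det(A)}\adj(A)$ as the unique two-sided inverse of $A$. Note that the paper itself gives no proof --- it quotes this statement from the cited reference \cite{matrix-results-adj} as background --- so there is no internal argument to compare against; yours is the standard derivation and is exactly what one would expect that reference to contain.
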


\begin{theorem}[Theorem 4.27 \cite{matrix-results-adj}][Cayley-Hamilton Theorem]\label{thrm:cayley-hamilton}
Let $P$ be a commutative ring. If $A\in \textup{GL}_n(R)$ then
$$\adj(A)=-\sum_{i=1}^{n}c_iA^{i-1}$$
where $c_i$ are the coefficients of the characteristic polynomial of $A$.
\end{theorem}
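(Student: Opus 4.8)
The plan is to work not with $A$ directly but with the matrix $xI_n - A$ over the polynomial ring $P[x]$, which is again commutative, so that Theorem \ref{thrm:conditions invertible} applies to it verbatim. Writing $p(x) := \det(xI_n - A) = \sum_{k=0}^{n}\alpha_k x^k$ for the characteristic polynomial, the fundamental adjugate identity of Theorem \ref{thrm:conditions invertible}, applied over $P[x]$, gives
$$(xI_n - A)\,\adj(xI_n - A) = \det(xI_n - A)\,I_n = p(x)\,I_n$$
as an equality in $\textup{M}_n(P[x])$. This single identity carries all the information we need; the remainder of the argument is extracting from it the coefficient of the correct power of $x$.

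First I would record that each entry of $\adj(xI_n - A)$ is, up to sign, a determinant of an $(n-1)\times(n-1)$ submatrix of $xI_n - A$, hence a polynomial in $x$ of degree at most $n-1$. One may therefore expand $\adj(xI_n - A) = \sum_{k=0}^{n-1} B_k x^k$ with coefficient matrices $B_k \in \textup{M}_n(P)$. Substituting this expansion into the displayed identity and comparing coefficients of each power $x^j$ produces a triangular system: the top degree forces $B_{n-1} = \alpha_n I_n$, the intermediate degrees give relations $B_{j-1} - A B_j = \alpha_j I_n$ for $1 \leqslant j \leqslant n-1$, and the constant term gives $-A B_0 = \alpha_0 I_n$. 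Solving this recursion downward expresses every $B_k$, and in particular $B_0$, explicitly as a polynomial in $A$ with coefficients among the $\alpha_k$.

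It then remains to identify $B_0$ with $\adj(A)$. Evaluating the expansion at $x = 0$ gives $B_0 = \adj(-A)$, and by homogeneity of the adjugate under scalar multiplication one has $\adj(-A) = (-1)^{n-1}\adj(A)$; combining this with the closed form for $B_0$ coming from the recursion reads off the stated identity, once the $\alpha_k$ are rewritten in terms of the coefficients $c_i$ of the characteristic polynomial as indexed in the statement. I also note that the hypothesis $A \in \textup{GL}_n(P)$ is not essential to this route, since the identity lives over $P[x]$ regardless; if one prefers to exploit invertibility, an alternative is to invoke the Cayley--Hamilton relation $p(A) = 0$ (which the same coefficient comparison proves along the way), multiply it through by $A^{-1}$, and then apply Theorems \ref{thrm:conditions invertible} and \ref{thrm:invertible as product} to replace $\det(A)\,A^{-1}$ by $\adj(A)$ and isolate it.

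The step I expect to be the main obstacle is purely bookkeeping: keeping the sign and indexing conventions consistent throughout. One has to reconcile the factor $(-1)^{n-1}$ coming from $\adj(-A)$ with the chosen normalisation of the characteristic polynomial --- whether it is taken as $\det(xI_n - A)$ or $\det(A - xI_n)$ --- and with the way the coefficients $c_i$ are matched against the powers $A^{i-1}$, so that all intermediate signs collapse to the single overall minus sign appearing in the statement. Once the indexing of the $c_i$ is pinned down, verifying that the recursion output equals $-\sum_{i=1}^{n} c_i A^{i-1}$ is a routine induction on $k$.
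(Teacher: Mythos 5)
This statement is one the paper does not prove at all: it is imported verbatim as background from \cite{matrix-results-adj} (alongside Theorems \ref{thrm:conditions invertible} and \ref{thrm:invertible as product}), so there is no in-paper argument to compare against. Judged on its own, your proof is correct and complete in outline. The coefficient-comparison route over $P[x]$ is the standard proof: the recursion $B_{n-1}=\alpha_n I_n$, $B_{j-1}-AB_j=\alpha_j I_n$, $-AB_0=\alpha_0 I_n$ solves to $B_k=\sum_{j=k+1}^{n}\alpha_j A^{j-1-k}$, so $B_0=\sum_{j=1}^{n}\alpha_j A^{j-1}$, and your evaluation step is sound because substituting the central scalar $x=0$ entrywise is a ring homomorphism commuting with the adjugate (this is exactly where the naive ``substitute $x=A$'' proof fails, and you correctly avoid it). Combining $\adj(-A)=(-1)^{n-1}\adj(A)$ with $\alpha_i=(-1)^n c_i$ under the normalisation $\det(A-xI_n)=\sum_i c_i x^i$ collapses the signs to the single minus in the statement --- and you are right that this is the one point requiring care, since the paper's statement leaves the convention implicit (note also its typo $\textup{GL}_n(R)$ for $\textup{GL}_n(P)$). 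Your observation that invertibility of $A$ is inessential is correct, and your alternative route --- multiply $p(A)=0$ by $A^{-1}$ and replace $\det(A)A^{-1}$ by $\adj(A)$ via Theorems \ref{thrm:conditions invertible} and \ref{thrm:invertible as product} --- is actually the closest in spirit to how the paper uses this cluster of cited results, namely in Corollary \ref{coro:matrices absolute topological} to see $A\mapsto\adj(A)$, and hence $A\mapsto A^{-1}$, as continuous.
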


\subsection{Topological groups}

\begin{defi}
Let $G$ be a totally disconnected,\ locally compact,\ second countable group (t.d.l.c.s.c.). We say $G$ is \textbf{regionally elliptic} if every compact subset of $G$ is contained in a compact subgroup.
\end{defi}

\begin{defi}[Introduction \cite{Burger2000}]
Let $G$ be a topological group. We define the \textbf{quasicentre} of $G$ as:
$$\textup{QZ}(G)=\{g\in G:\ C_G(g) \text{ is open in }G\}.$$
\end{defi}

When studying totally disconnected,\ locally compact groups,\ a class of groups that play an important role are the elementary groups. 

\begin{defi}[Definition 1.1\cite{elementary_first}]\label{defi:elementary groups}
The class of \textbf{elementary groups} is the smallest class $\mathcal{E}$ of t.d.l.c.s.c. groups such that:
\begin{enumerate}
    \item[(i)] $\mathcal{E}$ contains all second countable profinite groups and countable discrete groups;
    \item[(ii)] $\mathcal{E}$ is closed under taking group extensions,\ that is,\ if there is $N\trianglelefteq H$ closed subgroup such that $N\in\mathcal{E}$ and $H/N\in\mathcal{E}$ then $H\in\mathcal{E}$;
    \item[(iii)] $\mathcal{E}$ is closed under taking closed subgroups;
    \item[(iv)] $\mathcal{E}$ is closed under taking quotients by closed normal subgroups;
    \item[(v)] If $G$ is a t.d.l.c.s.c. group and $\bigcup_{i\in\mathbb{N}}O_i=G$,\ where $\{O_i\}_{i\in\mathbb{N}}$ is an $\subset$-increasing sequence of open subgroups of $G$ with $O_i\in\mathcal{E}$ for each $i$,\ then $G\in\mathcal{E}$. We say that $\mathcal{E}$ is closed under countable increasing union.
\end{enumerate}
\end{defi}

The class $\mathcal{E}$ of elementary groups admits a well-behaved rank $\xi$ \cite[Lemma 4.12]{elementary_first}. This rank roughly measures how many steps it is necessary to build an elementary group $G$ from the discrete and profinite groups under operations $(ii)$ until $(v)$.

\begin{prop}[Proposition 6.1 \cite{elementary_first}]\label{prop:rank quasicentre}
Let $G$ be a t.d.l.c.s.c. group. Assume $G$ contains an open compact subgroup $U\leqslant G$ such that $\textup{QZ}(U)$ is dense in $U$. Then $G$ is elementary and $\xi(G)\leqslant 3$.
\end{prop}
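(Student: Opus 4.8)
The plan is to use that the quasicentre is a topologically characteristic subgroup which survives into the larger group, in order to realise $G$ as a profinite-by-discrete extension, and then to read off the rank bound from the behaviour of $\xi$ on such extensions. First I would record the two cheap facts. Since $U$ is compact, totally disconnected and second countable it is a second countable profinite group, so $U\in\mathcal E$ with $\xi(U)\leqslant 2$ by Definition \ref{defi:elementary groups}. Next, because $U$ is open in $G$, any subgroup open in $U$ is open in $G$; hence for $g\in\textup{QZ}(U)$ the centraliser $C_U(g)=U\cap C_G(g)$ is open in $G$, and therefore $C_G(g)$, being a subgroup containing an open set, is open in $G$. This yields $\textup{QZ}(U)\subseteq\textup{QZ}(G)$, and as $\textup{QZ}(U)$ is dense in $U$ the quasicentre $\textup{QZ}(G)$ is dense in the open set $U$. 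Since a conjugate of an element with open centraliser again has open centraliser, $\textup{QZ}(G)$ is normal in $G$, so its closure $R:=\overline{\textup{QZ}(G)}$ is a closed normal subgroup containing $U$, hence open; consequently $G/R$ is discrete, and countable because $G$ is second countable, giving $\xi(G/R)\leqslant 2$.

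The core of the argument is to manufacture a compact open normal subgroup of $G$, so that $G$ becomes profinite-by-discrete. The key observation is that each $g\in\textup{QZ}(G)$ commensurates $U$: the subgroup $C_U(g)$ is closed of finite index, hence open, in the compact group $U$, and $g$ centralises $C_U(g)$, so $C_U(g)=gC_U(g)g^{-1}\subseteq U\cap gUg^{-1}$ has finite index in both $U$ and $gUg^{-1}$. Exploiting that these commensurating elements are dense in $U$ and that $\textup{QZ}(G)$ is normal, I would intersect $U$ with finitely many of its $G$-conjugates, or pass to the $G$-core of a suitable open subgroup, to produce a compact open subgroup $W$ normalised by all of $G$, the density being what should keep the relevant core open rather than letting it collapse to the identity. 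Granting such a $W$, it is profinite with $\xi(W)\leqslant 2$, the quotient $G/W$ is countable discrete with $\xi(G/W)\leqslant 2$, and the behaviour of the decomposition rank under extensions \cite[Lemma 4.12]{elementary_first} delivers $G\in\mathcal E$ with $\xi(G)\leqslant 3$.

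I expect the main obstacle to be exactly this middle step. The hypothesis only controls centralisers locally, inside the one fixed compact open subgroup $U$, whereas a compact open normal subgroup is a global object; groups such as $\textup{SL}_2(\mathbb{Q}_p)$, whose compact open subgroups have merely central quasicentre, show that a compact open subgroup alone gives rise to no normal compact open subgroup, so the density of $\textup{QZ}(U)$ must be used in an essential way to upgrade commensuration to genuine normality. The remaining difficulty is arithmetic rather than structural: one must invoke the sharp form of the extension estimate, by which an extension of a profinite group by a discrete group has rank at most $3$ and not $4$, since the naive additivity of $\xi$ applied to a rank-$2$ kernel and a rank-$2$ quotient would only give $\xi(G)\leqslant 4$. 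Packaging the profinite base $W$ and the discrete quotient $G/W$ through the single extension step, without over-counting the base profinite/discrete layer, is what pins the rank at $3$.
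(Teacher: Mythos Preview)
This proposition is quoted from \cite{elementary_first} and carries no proof in the present paper, so there is nothing here to compare against directly. That said, your proposal has a genuine gap at precisely the step you flag as the obstacle.

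The attempt to manufacture a compact open \emph{normal} subgroup $W$ of the whole group $G$ cannot succeed in general. The groups $\textup{aPGL}_\Lambda(\mathbb{F}_q)$ built in this very article---take for instance $\Lambda=\mathbf{Zig}$, which has finite neighbourhoods---are non-discrete, non-compact, topologically simple t.d.l.c.s.c.\ groups (Theorem~\ref{theorem:simple matrices}), and the image of $\textup{GL}_\Lambda(\mathbb{F}_q)$ is a compact open subgroup with dense quasicentre. If such a group possessed a compact open normal subgroup $W$, topological simplicity would force $W=\{1\}$ (it cannot be the whole non-compact group), making the group discrete, a contradiction. So density of $\textup{QZ}(U)$ does \emph{not} upgrade commensuration to global normality; your hoped-for $G$-core collapses to the identity.

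What does go through is a local version of your idea. One writes $G$, or more cleanly $H=\overline{\textup{QZ}(G)}$, as a countable increasing union of compactly generated open subgroups $O_n=\langle U,h_1,\ldots,h_n\rangle$ with the $h_i$ drawn from the dense subgroup $\textup{QZ}(G)$. Inside each $O_n$ your construction works: the normal core in $U$ of $\bigcap_{i\leqslant n} C_U(h_i)$ is open (each $C_U(h_i)$ has finite index in the profinite group $U$), is normal in $U$ by construction, and is centralised---hence normalised---by every $h_i$. Thus each $O_n$ is profinite-by-discrete, and closure of $\mathcal{E}$ under increasing unions together with the extension by the discrete quotient $G/H$ yields the bound $\xi(G)\leqslant 3$. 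The compact open normal subgroup exists stage by stage but shrinks as $n$ grows; your global $W$ is exactly this vanishing intersection.
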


\begin{defi}
The \textbf{derived series} of a topological group $G$ is the sequence of closed normal groups $\{G^{(n)}\}_{n\in\mathbb{N}}$ defined by $G^{(0)}=G$ and $G^{(n+1)}=\overline{[G^{(n)},G^{(n)}]}$. A group $G$ is \textbf{solvable} if this sequence stabilizes at $\{1\}$ after finitely many steps. A group $G$ is \textbf{residually solvable} if $\bigcap_{n\in \mathbb{N}}G^{(n)}=\{1\}$.
\end{defi}

\begin{theorem}[Theorem 8.1, Proposition 4.19 \cite{elementary_first}]\label{thrm:locally solvable is elementary}
Let $G$ be a t.d.l.c.s.c. group. If there is $U$ an open, compact subgroup of $G$ such that $U$ is solvable, then $G$ is elementary with $\xi(G)<\omega$. 
\end{theorem}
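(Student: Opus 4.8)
The plan is to first show that $U$ itself is elementary of finite rank, and then to propagate this to all of $G$ using the open-compact structure together with the closure properties recorded in Definition \ref{defi:elementary groups}. Since $U$ is compact and sits inside a totally disconnected group, $U$ is profinite; as a subspace of a second countable space it is also second countable. Thus $U$ is a second countable profinite group, and by solvability its derived series $U = U^{(0)} \trianglerighteq U^{(1)} \trianglerighteq \cdots \trianglerighteq U^{(d)} = \{1\}$ is a finite chain of closed normal subgroups with each quotient $U^{(k)}/U^{(k+1)}$ abelian.

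First I would observe that each $U^{(k)}$ is closed in the compact group $U$, hence itself second countable profinite, and that each quotient $U^{(k)}/U^{(k+1)}$ is a second countable profinite (abelian) group. By clause (i) of Definition \ref{defi:elementary groups} every such quotient lies in $\mathcal{E}$. Starting from $U^{(d)} = \{1\} \in \mathcal{E}$ and applying clause (ii) (closure under group extensions) along the derived series, a downward induction gives $U^{(k)} \in \mathcal{E}$ for every $k$, and in particular $U \in \mathcal{E}$. The rank $\xi$ stays finite at each step: the subadditivity of $\xi$ under extensions from \cite{elementary_first} bounds $\xi(U)$ in terms of the derived length $d$ and the (bounded) rank of profinite groups, so $\xi(U) < \omega$.

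Next I would lift elementarity from $U$ to $G$. Because $U$ is open and $G$ is second countable, the coset space $G/U$ is countable; I would fix representatives $g_1, g_2, \ldots$ and set $O_n := \langle U, g_1, \ldots, g_n \rangle$. Each $O_n$ is an open subgroup of $G$ (it contains the open set $U$), is compactly generated (by the compact set $U$ together with finitely many elements), and the $O_n$ form an $\subset$-increasing sequence with $\bigcup_n O_n = G$. If each $O_n$ is elementary of finite rank, then clause (v) of Definition \ref{defi:elementary groups} yields $G \in \mathcal{E}$, and the rank behaviour under countable increasing unions keeps $\xi(G)$ finite.

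The main obstacle is the intermediate step: showing that each compactly generated $O_n$, equipped with the elementary open compact subgroup $U$, is itself elementary with controlled rank. This does not follow formally from clauses (i)--(v) applied to $U$ alone, since passing from an open subgroup to the whole ambient group is not among the raw closure axioms; it is precisely the content of the permanence results of \cite{elementary_first} (Proposition 4.19 and Theorem 8.1), which handle the compactly generated case. I would invoke these to conclude $\xi(O_n) < \omega$ uniformly in $n$, after which the exhaustion argument above finishes the proof. The final rank estimate would be assembled from the derived length of $U$, the extension subadditivity, and the increment contributed by the countable increasing union.
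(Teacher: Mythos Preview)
The paper does not supply a proof of this statement at all: it is quoted verbatim as Theorem 8.1 and Proposition 4.19 of \cite{elementary_first} and used as a black box. So there is no in-paper argument to compare against.

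Your sketch, however, is not an independent proof. Two points. First, the opening paragraph is unnecessary: $U$ is a second countable profinite group, so clause (i) of Definition \ref{defi:elementary groups} already places $U$ in $\mathcal{E}$ with $\xi(U)$ bounded; unwinding the derived series adds nothing. Second, and more seriously, the passage from $U$ to $G$ is circular. You correctly identify the ``main obstacle'' as showing that each compactly generated open $O_n$ containing $U$ is elementary of uniformly bounded finite rank, and then you propose to resolve it by invoking Proposition 4.19 and Theorem 8.1 of \cite{elementary_first}. But those are exactly the results being asserted here. An actual proof has to explain why an open elementary (or solvable profinite) subgroup forces the ambient compactly generated group to be elementary with a rank bound depending only on the derived length---this is where the substantive work in \cite{elementary_first} lies (Cayley--Abels graph arguments, the decomposition rank machinery), and none of it appears in your outline. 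As written, the proposal reduces to ``cite \cite{elementary_first}'', which is precisely what the paper does.
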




\section{Group of invertible matrices}\label{groups}\label{5.5}

\subsection{\texorpdfstring{$\textup{GL}_\Lambda(P)$}{TEXT} and its properties}

In this section we will work with the group of incidence algebras, and show how to use them to build simple topological groups. In this section we assume $P$ is a commutative ring.

\begin{defi}[Definition 1.1 \cite{Matrices1973}]\label{defi:proset matrices}\index{$\textup{M}_{\Lambda}(P)$|ndx}\index{ring of matrices indexed by a proset|ndx}
Let $P$ be a ring and $(\Lambda,  \preceq)$ a locally finite proset. We define the \textbf{incidence ring of $\Lambda$ over $P$} as
$$\textup{M}_\Lambda(P):=\left\{ (a_{s_1,s_2})\in P^{\Lambda\times \Lambda}
: \textit{ if }a_{s_1,s_2}\neq 0\textit{ then } s_1  \preceq s_2 \right\}$$
with  coordinatewise sum and multiplication defined as follows: given two element $(a_{s_1,s_2}),(b_{s_1,s_2})\in \textup{M}_\Lambda(P)$ then $(a_{s_1,s_2}).(b_{s_1,s_2})=(c_{s_1,s_2})$, where 
$$c_{s_1,s_2}=\sum_{t\in[s_1,s_2]}a_{s_1,t}b_{t,s_2}.$$ 
\end{defi}

 We will denote the elements of $\textup{M}_\Lambda(P)$ as $A,B,C$. We denote $A_{s_1,s_2}$ as the coordinate $(s_1, s_2)$ in the matrix $A$. When talking about the coordinate of the product $A_1A_2\ldots A_n$ we denote it as $(A_1A_2\ldots A_n)_{s_1,s_2}$. Following are some examples of incidence rings.

\begin{ex}
\begin{enumerate}
    \item Given $n\in\mathbb{N}$, the ring $\textup{M}_{\mathbf{n}}(P)$ is the ring of $n\times n$ matrices over the ring $P$ with the usual multiplication.
    \item Given $n\in\mathbb{N}$, the ring $\textup{M}_{\mathbf{n<}}(P)$ is the ring of $n\times n$ upper triangular matrices over $P$ with the usual multiplication.
    \item Given $S$ a set, the ring $\textup{M}_S(P)$ is isomorphic to $\prod_{s\in S} P$ with the coordinate-wise multiplication. 
    \item The ring $\textup{M}_{\mathbf{N}^*_d}(P)$ has elements of the form:

$$
A:=\left(\begin{array}{cccccc}
A_{1,1} & A_{1,2} & A_{1,3} & A_{1,4} & A_{1,5}  & \cdots \\
0 & A_{2,2} & 0 & A_{2,4} & 0 & \cdots\\
0 & 0 & A_{3,3} & 0 & 0  & \cdots \\
0 & 0 & 0 & A_{4,4} & 0 &  \cdots \\
0 & 0 & 0 & 0 & A_{5,5} &  \cdots  \\
0 & 0 & 0 & 0 & 0 &  \cdots \\
\vdots & \vdots  & \vdots & \vdots & \vdots & \ddots
\end{array}\right)
$$

\noindent and the multiplication of two matrices $A,B\in \textup{M}_{\mathbf{N}^*_d}(P)$ is:

$$
A
B:=\left(\begin{array}{ccccccc}
A_{1,1}B_{1,1} & \sum_{d|2}A_{1,d}B_{d,2} & \sum_{d|3}A_{1,d}B_{d,3} & \sum_{d|4}A_{1,d}B_{d,4} & \sum_{d|5}A_{1,d}B_{d,5}  & \cdots \\
0 & A_{2,2}B_{2,2} & 0 & \sum_{d|4}A_{2,d}B_{d,4} & 0 & \cdots\\
0 & 0 & A_{3,3}B_{3,3} & 0 & 0 &  \cdots \\
0 & 0 & 0 & A_{4,4}B_{4,4} & 0 &  \cdots \\
0 & 0 & 0 & 0 & A_{5,5}B_{5,5} &  \cdots  \\
0 & 0 & 0 & 0 & 0 &  \cdots \\
\vdots & \vdots  & \vdots & \vdots & \vdots & \ddots
\end{array}\right)
$$
\end{enumerate}
\end{ex}

\begin{defi}\index{$\textup{GL}_\Lambda(P)$|ndx}\index{general linear group indexed by a proset|ndx}
Given $P$ a ring and $\Lambda$ a proset. We define the \textbf{general linear group of $P$ in relation to $\Lambda$} as:
$$\textup{GL}_\Lambda(P):=\left\{ A\in \textup{M}_\Lambda(P)
:\  A^{-1}\in \textup{M}_\Lambda(P)\right\}.$$
That is,\ the group of units of $\textup{M}_\Lambda(P)$.
\end{defi}

\begin{notation}\label{nota:normal subgroups}\label{defi:normal subgroups}
Let $\Lambda$ a locally finite proset and $P$ a commutative ring. We denote the following normal subgroups:
\begin{itemize}
    \item Given $s_1  \preceq s_2$ in $\Lambda$ we define 
    $$ N^{\Lambda}_{[s_1,s_2]}(P)=\{A\in \textup{GL}_{\Lambda}(P):\  \text{if }s\in [s_1,\ s_2],\ A_{s,s}=1 \text{,\ and if } t_1\neq t_2\in[s_1,\ s_2],\ \ A_{t_1,t_2}= 0 \}.$$ 
    This normal subgroup has quotient 
    $$\textup{GL}_{\Lambda}(P)/N^{\Lambda}_{[s_1,\ s_2]}(P)\cong \textup{GL}_{[s_1,\ s_2]}(P).$$
    \item For a convex set $\Lambda'\subset \Lambda$,\ the normal subgroup
    $$N^{\Lambda}_{\Lambda'}=\bigcap_{s_1  \preceq s_2\in \Lambda'}N^{\Lambda}_{[s_1,\ s_2]}.$$
    This normal subgroup has quotient $$\textup{GL}_{\Lambda}(P)/N^{(\Lambda,\  \preceq)}_{\Lambda'}\cong \textup{GL}_{\Lambda'}(P).$$
    \item For $\{\Lambda_i\}_{i\in I}$ a locally convex set of $\Lambda$ we define the normal subgroup 
    $$N^{\Lambda}_{\{\Lambda_i\}_{i\in I}}=\bigcap_{i\in I}N^{\Lambda}_{\Lambda_i}.$$
    This normal subgroup has quotient 
    $$\textup{GL}_{\Lambda}(P)/N^{\Lambda}_{\{\Lambda_i\}_{i\in I}}\cong \prod_{i\in I} \textup{GL}_{\Lambda_i}(P).$$
\end{itemize} 
\end{notation}

\begin{ex}
Let $\Lambda=\mathbf{3}<$,\ $P=\mathbb{Z}$ and $\alpha=\{0,1\}$. Then $N^{\mathbf{3}<}_\alpha$ is the normal subgroup with elements of the form:
$$\left(\begin{array}{lll}
1 & 0 & a_{0,2} \\
0 & 1 & a_{1,2} \\
0 & 0 & a_{2,2} 
\end{array}\right)$$
where $a_{2,2}\in \{-1,1\}$. 
\end{ex}

\begin{prop}\label{prop:conditions to have inverse on lambda}\cite[Theorem 1.16]{Matrices1973}\index{absolute topological ring|ndx}
Let $P$ be a commutative ring and $\Lambda$ a preordered set,\ and $A\in \textup{M}_\Lambda(P)$. Then $A\in \textup{GL}_\Lambda(P)$ if,\ and only if,\ $AN_{[s,s]}$ is invertible for all $s$ in $\Lambda$. In other words, a matrix $A$ is invertible if for every interval, the finite submatrix with coordinates restricted to the interval is also invertible.
\end{prop}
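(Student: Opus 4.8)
The plan is to read the stated condition as the assertion that each \emph{diagonal block} of $A$ is invertible: writing $A|_{[s,s]}\in\textup{M}_{[s,s]}(P)$ for the (finite, by local finiteness) submatrix of $A$ with rows and columns in the equivalence class $[s,s]$, the hypothesis ``$AN_{[s,s]}$ is invertible'' says exactly that the image of $A$ in $\textup{GL}_\Lambda(P)/N^\Lambda_{[s,s]}\cong\textup{GL}_{[s,s]}(P)$, namely $A|_{[s,s]}$, is invertible for every $s\in\Lambda$. I would first record that this is equivalent to the restriction of $A$ to \emph{every} interval $[s_1,s_2]$ being invertible: such a restriction is block upper triangular with respect to the equivalence classes it meets, so it is a unit precisely when all of its diagonal blocks $A|_{[t,t]}$, $t\in[s_1,s_2]$, are. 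This reconciles the displayed condition with the clarifying sentence.

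For the forward direction I would use that, for any convex (closed-under-intervals) subset $\Lambda'\subseteq\Lambda$, the restriction map $\textup{M}_\Lambda(P)\to\textup{M}_{\Lambda'}(P)$ is a unital ring homomorphism --- this is the content behind the quotient $\textup{GL}_\Lambda(P)/N^\Lambda_{\Lambda'}\cong\textup{GL}_{\Lambda'}(P)$ recorded in Notation~\ref{defi:normal subgroups}. A unital ring homomorphism sends units to units, so if $A\in\textup{GL}_\Lambda(P)$ then $A|_{[s,s]}$ is a unit of $\textup{M}_{[s,s]}(P)$ for every $s$.

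The substance is the converse. Assuming every diagonal block is invertible, I would split $A=D+N$, where $D$ is the block-diagonal part supported on the equivalence classes (so $D_{s_1,s_2}=A_{s_1,s_2}$ when $s_1,s_2\in[s,s]$ for some $s$, and $0$ otherwise) and $N$ is the remaining part supported on strict pairs $s_1\precnsim s_2$. One checks $D$ is a unit of $\textup{M}_\Lambda(P)$ whose block-diagonal inverse $D^{-1}$ is given blockwise by $(A|_{[s,s]})^{-1}$; this uses that for $s_1,s_2$ in one class the interval $[s_1,s_2]$ equals that class, so multiplication on $[s,s]$ is computed internally. Then $A=D(I+M)$ with $M:=D^{-1}N$, and a short check shows $M$ is again supported on strict pairs $s_1\precnsim s_2$.

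It remains to invert $I+M$, and this is where the main obstacle lies: I would set $(I+M)^{-1}:=\sum_{k\geqslant 0}(-M)^k$ and must argue this makes sense. The key point is local finiteness: for a fixed pair $s_1\preceq s_2$, a nonzero contribution to $(M^k)_{s_1,s_2}$ requires a strictly increasing chain $s_1\precnsim t_1\precnsim\cdots\precnsim t_{k-1}\precnsim s_2$ inside $[s_1,s_2]$, whose terms lie in $k+1$ distinct equivalence classes; since $[s_1,s_2]$ is finite, this forces $(M^k)_{s_1,s_2}=0$ once $k\geqslant |[s_1,s_2]|$. Hence the series is a finite sum at every entry, defines an element of $\textup{M}_\Lambda(P)$, and the telescoping $(I+M)\sum_{k}(-M)^k=I$ holds entrywise. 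I would conclude $A^{-1}=(I+M)^{-1}D^{-1}\in\textup{M}_\Lambda(P)$, so $A\in\textup{GL}_\Lambda(P)$. The only delicate steps are justifying that this entrywise-finite geometric series genuinely lands in $\textup{M}_\Lambda(P)$ and that the series manipulations are legitimate --- both of which reduce to the chain-length bound above together with the fact that each entry of any product in $\textup{M}_\Lambda(P)$ is a finite sum over an interval.
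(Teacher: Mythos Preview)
Your argument is correct and is essentially the standard proof of this result for incidence algebras: the forward direction via the ring homomorphism onto $\textup{M}_{[s,s]}(P)$, and the converse via the decomposition $A=D(I+M)$ with $D$ block-diagonal and $M$ supported on strict pairs, inverted by a geometric series that terminates entrywise because of local finiteness. Note, however, that the paper does not supply its own proof of this proposition --- it is stated with a citation to \cite[Theorem~1.16]{Matrices1973} and used as a black box --- so there is nothing in the paper to compare your argument against beyond confirming that your reading of the statement (diagonal blocks $A|_{[s,s]}$ invertible, equivalently every interval restriction invertible) matches the intended one, which it does.
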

\begin{coro}\label{coro:matrices absolute topological}
Let $P$ be a commutative, topological ring. Then for every finite preordered set $\Lambda$ we have that $\textup{M}_\Lambda(P)$ is an absolute topological ring if, and only if, $P$ is an absolute topological ring.
\end{coro}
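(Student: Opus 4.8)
The plan is to reduce everything to the full matrix ring $\textup{M}_n(P)$, where $n=|\Lambda|$, and then invoke the adjugate formulas of Theorems \ref{thrm:conditions invertible} and \ref{thrm:invertible as product}. Since $\Lambda$ is finite, $\textup{M}_\Lambda(P)$ carries the product topology of finitely many copies of $P$, and both its sum and its product are given coordinatewise by finite sums of products of entries; hence $\textup{M}_\Lambda(P)$ is always a topological ring once $P$ is, so the only content of being \emph{absolute} (Definition \ref{thrm:giving topology to unit group}) is that inversion on the unit group is continuous. The first step I would take is to fix a linear extension of the preorder $\preceq$, giving an order-compatible bijection $\Lambda\cong\{1,\dots,n\}$; under it $\textup{M}_\Lambda(P)$ becomes a unital subring of $\textup{M}_n(P)$, with the defining convolution product agreeing with ordinary matrix multiplication (the extra terms in the $\textup{M}_n(P)$ product vanish precisely because $A_{s_1,t}B_{t,s_2}\neq 0$ forces $t\in[s_1,s_2]$). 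By uniqueness of inverses, for $A\in\textup{M}_\Lambda(P)$ its inverse in $\textup{GL}_\Lambda(P)$, when it exists, coincides with its inverse in $\textup{M}_n(P)$.

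For the direction assuming $P$ absolute, take $A\in\textup{GL}_\Lambda(P)$. Viewed in $\textup{M}_n(P)$ it is invertible, so by Theorem \ref{thrm:conditions invertible} $\det(A)\in P^*$, and by Theorem \ref{thrm:invertible as product} each entry of $A^{-1}$ equals $\det(A)^{-1}$ times an entry of $\adj(A)$. Every entry of $\adj(A)$ is a signed minor, hence a polynomial in the entries of $A$, so it depends continuously on $A$; likewise $A\mapsto\det(A)$ is continuous, and it lands in $P^*$, where inversion is continuous by the hypothesis that $P$ is absolute. Thus $A\mapsto\det(A)^{-1}$ is continuous, and each coordinate of $A^{-1}$, being a product of continuous $P$-valued functions of $A$, is continuous; since the codomain is a finite product, $A\mapsto A^{-1}$ is continuous on $\textup{GL}_\Lambda(P)$, which is exactly the statement that $\textup{M}_\Lambda(P)$ is absolute.

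For the converse, assume $\textup{M}_\Lambda(P)$ is absolute and fix any $s_0\in\Lambda$. The scalar embedding $\iota\colon P\to\textup{M}_\Lambda(P)$, $p\mapsto pI$, is a continuous ring homomorphism that is a homeomorphism onto its image (its inverse is the projection onto the $(s_0,s_0)$ coordinate), it sends $P^*$ into $\textup{GL}_\Lambda(P)$, and $\iota(p)^{-1}=\iota(p^{-1})$. Therefore $p\mapsto p^{-1}$ factors as $P^*\xrightarrow{\iota}\textup{GL}_\Lambda(P)\xrightarrow{A\mapsto A^{-1}}\textup{GL}_\Lambda(P)\to P$, the last arrow being projection onto the $(s_0,s_0)$ entry; each map is continuous, so inversion on $P^*$ is continuous and $P$ is absolute.

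The main obstacle, and the point that makes the commutative adjugate machinery legitimately applicable even though $\textup{M}_\Lambda(P)$ is itself noncommutative, is the subring identification of the first paragraph together with the fact that the two notions of inverse agree; once this is in place the only remaining input is the elementary observation that polynomial maps $P^m\to P$ are continuous in any topological ring. I expect the reverse direction to be routine and the forward direction to hinge entirely on transporting the formula $A^{-1}=\det(A)^{-1}\adj(A)$ from $\textup{M}_n(P)$ to $\textup{M}_\Lambda(P)$, which Proposition \ref{prop:conditions to have inverse on lambda} also underwrites by guaranteeing that invertibility in $\textup{M}_\Lambda(P)$ is detected on the (finite) diagonal blocks.
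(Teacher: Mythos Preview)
Your forward direction is essentially the paper's: both write $A^{-1}=\det(A)^{-1}\adj(A)$ as a composite of continuous maps, the paper invoking Cayley--Hamilton (Theorem~\ref{thrm:cayley-hamilton}) for the polynomiality of $\adj$ where you use directly that cofactors are polynomials in the entries; you also make explicit the unital embedding $\textup{M}_\Lambda(P)\hookrightarrow\textup{M}_n(P)$ that the paper leaves tacit. The converse is handled differently: the paper simply takes $\Lambda=\{0\}$ and observes $\textup{M}_{\{0\}}(P)=P$, which literally only treats one $\Lambda$, whereas your scalar-embedding argument proves the converse for an arbitrary fixed finite $\Lambda$ and so matches the ``for every $\Lambda$'' reading of the statement more faithfully. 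One small quibble: for a genuine preorder with distinct equivalent elements a \emph{linear extension} in the poset sense does not exist, but this is harmless---any bijection $\Lambda\cong\{1,\dots,n\}$ already makes $\textup{M}_\Lambda(P)$ a unital subring of $\textup{M}_n(P)$, so your argument goes through verbatim.
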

\begin{proof}
If $P$ is an absolute topological ring, by Theorem \ref{thrm:cayley-hamilton} we have that the map $A\mapsto \adj(A)$ is a polynomial, hence it is continuous. It is also the case that $\textup{det}:\textup{M}_\Lambda(P)\rightarrow P$ is a polynomial function on $P^{\Lambda\times \Lambda}$, therefore it is also continuous. As $P$ is an absolute topological ring, if $A\in \textup{GL}_\Lambda(P)$ then the map $\det(A)\mapsto \frac{1}{\det(A)}$ is also continuous. It then follows by Theorem \ref{thrm:invertible as product} that the the map $^{-1}:\textup{GL}_\Lambda(P)\rightarrow \textup{GL}_\Lambda(P)$ given by $A\mapsto A^{-1}$ is continuous.\\
\indent On the other hand, let $\Lambda=\{0\}$ be the ordered set with only one element. As $\textup{GL}_\Lambda(P)\cong P^*$ it follows that if $P$ is not an absolute topological ring then $\textup{M}_\Lambda(P)$ is not an absolute topological ring.
\end{proof}
As $P$ is commutative,\ when $\Lambda$ is a poset we can define $\textup{GL}_\Lambda(P)$ as all the infinite matrices such that the diagonal elements belongs to $P^*$. In the case $P$ is not commutative this is not always the case,\ as shown in articles \cite{inverse_upper_that_is_lower} and \cite{inverse_upper_that_is_lower_2}.

\begin{remark}\label{remark:top_matrix_grps}
    The following result is a correction and a generalization for the statement in \cite[Lemma 4.1]{simple_matrices}, as the proof assumes that if $P$ is a topological ring and $\Lambda$ is a finite preordered set then the group $\textup{GL}_\Lambda(P)$ is a topological group, which is not always the case as seen in Corollary \ref{coro:matrices absolute topological}.
\end{remark}

\begin{prop}
    Let $P$ be an absolute topological ring and $\Lambda$ a locally finite proset. Then the group $\textup{GL}_\Lambda(P)$ is a closed subset of $\prod_{(a,b)\in \Lambda\times \Lambda}P$, where the topology of $\prod_{(a,b)\in \Lambda\times \Lambda}P$ is the product topology. It is also the case that this topology makes $\textup{GL}_\Lambda(P)$ a topological group. 
\end{prop}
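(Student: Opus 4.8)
The plan is to separate the two assertions and to base everything on the local finiteness of $\Lambda$ together with the fact that inversion in $\textup{M}_\Lambda(P)$ is computed locally on intervals. First I would record that $\textup{M}_\Lambda(P)$ itself is closed in $\prod_{(a,b)\in\Lambda\times\Lambda}P$: by Definition \ref{defi:proset matrices} it equals $\bigcap_{s_1\npreceq s_2}\pi_{(s_1,s_2)}^{-1}(\{0\})$, where $\pi_{(s_1,s_2)}$ is the coordinate projection; since $P$ is Hausdorff the singleton $\{0\}$ is closed and each projection is continuous, so this intersection of closed sets is closed. I then give $\textup{M}_\Lambda(P)$ and $\textup{GL}_\Lambda(P)$ the subspace topology and prove the group structure relative to it.

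For the topological group claim, multiplication is handled coordinatewise: each output entry $(AB)_{s_1,s_2}=\sum_{t\in[s_1,s_2]}A_{s_1,t}B_{t,s_2}$ is a \emph{finite} sum by local finiteness of $\Lambda$, hence a continuous function of the finitely many relevant coordinates of $A$ and $B$, and continuity of the map into $\prod_{(a,b)}P$ follows from the universal property of the product topology. For inversion I would exploit locality: given $A\in\textup{GL}_\Lambda(P)$ and $s_1\preceq s_2$, convexity of $[s_1,s_2]$ forces $[s,s']\subseteq[s_1,s_2]$ for all $s,s'\in[s_1,s_2]$, so the block of $AA^{-1}$ on $[s_1,s_2]$ equals the product of the finite blocks $A|_{[s_1,s_2]}$ and $(A^{-1})|_{[s_1,s_2]}$; since $AA^{-1}=I$ this yields $(A^{-1})|_{[s_1,s_2]}=(A|_{[s_1,s_2]})^{-1}$. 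By Theorem \ref{thrm:invertible as product} the latter is $\det(A|_{[s_1,s_2]})^{-1}\adj(A|_{[s_1,s_2]})$, so the entry $(A^{-1})_{s_1,s_2}$ is a polynomial in finitely many coordinates of $A$ (the $\adj$ term, continuous) times the inverse of $\det(A|_{[s_1,s_2]})$. As $\det$ and $\adj$ are polynomial maps and $u\mapsto u^{-1}$ is continuous on $P^\ast$ precisely because $P$ is an absolute topological ring, each coordinate of $A^{-1}$ depends continuously on $A$, and inversion is therefore continuous.

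The main obstacle is closedness of $\textup{GL}_\Lambda(P)$, not the group structure. Combining Proposition \ref{prop:conditions to have inverse on lambda} with Theorem \ref{thrm:conditions invertible}, membership reduces to the condition $\det(A|_{[s,s]})\in P^\ast$ for every $s$, so $\textup{GL}_\Lambda(P)=\textup{M}_\Lambda(P)\cap\bigcap_{s}f_s^{-1}(P^\ast)$ with $f_s(A)=\det(A|_{[s,s]})$ continuous. The catch is that $f_s^{-1}(P^\ast)$ is closed only when $P^\ast$ is closed in $P$, equivalently when $P\setminus P^\ast$ is open; this holds in the discrete and profinite cases that motivate the paper but is not automatic for an arbitrary absolute topological ring. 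I therefore expect the robust route to be the graph embedding $A\mapsto(A,A^{-1})$ into $\big(\prod_{(a,b)}P\big)^{2}$, whose image is $\{(A,B):A,B\in\textup{M}_\Lambda(P),\ AB=BA=I\}$. This image is unconditionally closed, since multiplication is continuous and $\{I\}$ is closed by Hausdorffness, and the embedding carries exactly the topological-group topology produced above; this is where I would concentrate the closedness argument.
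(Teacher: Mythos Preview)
The paper does not give a self-contained argument here; it simply refers to \cite[Lemma~4.1]{simple_matrices}, so there is no detailed proof to compare against. Your treatment of the topological-group part --- closedness of $\textup{M}_\Lambda(P)$, continuity of multiplication via finiteness of intervals, and continuity of inversion via the block identity $(A^{-1})|_{[s_1,s_2]}=(A|_{[s_1,s_2]})^{-1}$ together with Theorem~\ref{thrm:invertible as product} and the absolute-topological-ring hypothesis --- is the natural route and is correct; it is presumably what the cited lemma does as well.

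Your closedness discussion, however, has a genuine gap. You correctly observe that the direct route via $f_s^{-1}(P^{*})$ fails unless $P^{*}$ is closed in $P$, and you then propose the graph embedding $A\mapsto(A,A^{-1})$ with closed image in $(\prod P)^{2}$. But closedness of that image does not yield closedness of $\textup{GL}_\Lambda(P)$ in $\prod P$: the first projection $(\prod P)^{2}\to\prod P$ is not a closed map. Concretely, take $\Lambda=\{0\}$ and $P=\mathbb{R}$; then $\textup{GL}_\Lambda(P)=\mathbb{R}^{*}$, the graph $\{(x,x^{-1}):x\neq 0\}$ is closed in $\mathbb{R}^{2}$, yet $\mathbb{R}^{*}$ is not closed in $\mathbb{R}$. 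This example in fact shows more: the closedness assertion as stated is \emph{false} for general absolute topological rings, so no argument can rescue it without an extra hypothesis (such as $P^{*}$ closed in $P$, which does hold in the discrete and profinite settings of \cite{simple_matrices}). What your graph argument genuinely buys is that $\textup{GL}_\Lambda(P)$ with the subspace topology is homeomorphic to a closed subspace of $(\prod P)^{2}$; that is the correct substitute for applications, but it is a different statement from the one written.
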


\begin{proof}
    The proof is the same as in \cite[Lemma 4.1]{simple_matrices}.
\end{proof}

\begin{remark}\label{remark:nets and open}
    Let $\{X_i\}_{i\in I}$ be a collection of topological spaces. Define $X=\prod_{i\in I}X_i$ a topological space given by the product topology on the sets $X_i$. For each $i\in I$, let $\pi_i:X\rightarrow X_i$ be the projection on the $i$-th coordinate. Then:
    \begin{itemize}
        \item The subsets $O\subset X$ such that for all $i\in I$ the set $\pi_i(O)$ is an open subset of $X_i$, and there is a finite subset $J\subset I$ such that if $i\in I\backslash J$ then $\pi_i(O)=X_i$ form a basis of open subsets of $\prod_{i\in I}X_i$.
        \item Given $\{x_a\}_{a\in A}$ a net in $X$ converges, it converges to $x$ if, and only if, for every $i\in I$ the net $\{\pi_i(x_a)\}_{a\in A}$ converges to $\pi_i(x)$ in $X_i$. 
    \end{itemize}
\end{remark}

The following result follows from the fact that the topology on $\textup{GL}_{\Lambda}(P)$ is given by the product topology.

\begin{coro}\label{coro:open iff matrices group}
Let $P$ a commutative,\ absolute topological ring,\ $\Lambda$ a locally finite proset and $\{\Lambda_i\}_{i\in I}\subset \Lambda$ locally convex. Then the following are true:
\begin{enumerate}
    \item The normal subgroup $N^{\Lambda}_{\{\Lambda_i\}_{i\in I}}$ is closed in $\textup{GL}_\Lambda(P)$.
    \item The normal subgroup $N^{\Lambda}_{\{\Lambda_i\}_{i\in I}}$ is open if,\ and only if,\ $\bigsqcup_{i\in I} \Lambda_i$ is a finite set and $P$ has the discrete topology.
\end{enumerate}
\end{coro}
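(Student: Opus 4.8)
The plan is to read everything off the product topology on $\textup{GL}_\Lambda(P)\subseteq\prod_{(a,b)\in\Lambda\times\Lambda}P$ together with the basis of open sets from Remark \ref{remark:nets and open}. First I would unwind $N:=N^{\Lambda}_{\{\Lambda_i\}_{i\in I}}$ into coordinatewise conditions. Since each $\Lambda_i$ is convex it is closed under intervals, so $A\in N$ if and only if $A_{s,s}=1$ for every $s\in\bigsqcup_{i\in I}\Lambda_i$ and $A_{t_1,t_2}=0$ for every pair $t_1\precnapprox t_2$ lying in a single $\Lambda_i$ (pairs that are incomparable contribute nothing, being forced to $0$ already in $\textup{M}_\Lambda(P)$). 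Writing $C$ for this set of constrained coordinates and $c_{a,b}\in\{0,1\}$ for the prescribed value, we have $N=\textup{GL}_\Lambda(P)\cap\bigcap_{(a,b)\in C}\pi_{(a,b)}^{-1}(\{c_{a,b}\})$. Part (1) is then immediate: $P$ is Hausdorff, so each $\{c_{a,b}\}$ is closed, each preimage $\pi_{(a,b)}^{-1}(\{c_{a,b}\})$ is closed in the product, an intersection of closed sets is closed, and intersecting with $\textup{GL}_\Lambda(P)$ gives a set closed in the subspace topology.

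For the easy direction of (2), if $\bigsqcup_{i\in I}\Lambda_i$ is finite then each $\Lambda_i$ is finite and $C$ is finite, while discreteness of $P$ makes $\{0\}$ and $\{1\}$ open. The basic open set $O$ that pins each $(a,b)\in C$ to $\{c_{a,b}\}$ and lets every other coordinate range over all of $P$ is then open by Remark \ref{remark:nets and open}, contains the identity $I$, and satisfies $O\cap\textup{GL}_\Lambda(P)\subseteq N$. Hence $N$ is a neighbourhood of $I$, and being a subgroup of a topological group it is open.

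For the forward direction I would assume $N$ open, so $N$ contains a basic open set $O\ni I$ that pins only the finitely many coordinates of some finite $J$. The key tool is one-parameter subgroups. Given a comparable pair $t_1\precnapprox t_2$ inside some $\Lambda_i$, the map $p\mapsto I+p\,e_{t_1,t_2}$ (with $e_{t_1,t_2}$ the matrix whose only nonzero entry is $1$ at position $(t_1,t_2)$) is a continuous injective homomorphism $(P,+)\to\textup{GL}_\Lambda(P)$: the image lies in $\textup{M}_\Lambda(P)$ because $t_1\preceq t_2$, each such matrix is unipotent hence invertible by Proposition \ref{prop:conditions to have inverse on lambda}, and $e_{t_1,t_2}^2=0$ gives the homomorphism property. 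It is a topological embedding, with continuous inverse $\pi_{(t_1,t_2)}$, and meets $N$ only at $p=0$; openness of $N$ therefore forces $\{0\}$ to be open in $P$, i.e. $P$ is discrete. For finiteness I would argue by contradiction: if $\bigsqcup_{i\in I}\Lambda_i$ were infinite there would be a constrained coordinate $(a,b)\in C\setminus J$, and since $\pi_{(a,b)}(O)=P$ the corresponding witness — the unipotent $I+p\,e_{t_1,t_2}$ with $p\neq 0$ for an off-diagonal coordinate, or the diagonal $D_u$ carrying a unit $u\neq 1$ at $(s,s)$ and $1$ elsewhere for a diagonal coordinate — would lie in $O\cap\textup{GL}_\Lambda(P)$ but not in $N$, contradicting $O\cap\textup{GL}_\Lambda(P)\subseteq N$.

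The main obstacle is precisely this forward implication, and specifically the conclusion ``$P$ discrete''. The additive (unipotent) embedding is what converts openness of $N$ into openness of $\{0\}\subseteq P$, and it is available only when some $\Lambda_i$ actually contains a comparable pair; the diagonal embeddings $u\mapsto D_u$ alone yield only that $P^{*}$ is discrete, which is strictly weaker. Likewise, using diagonal witnesses to force $\bigsqcup_{i\in I}\Lambda_i$ finite needs a unit $u\neq 1$ to exist. I would therefore isolate the degenerate configurations — every $\Lambda_i$ an antichain with trivial equivalence classes, or $P^{*}=\{1\}$ — where the constraints collapse and these witnesses disappear, and note that for the non-degenerate locally convex collections arising in the constructions of Subsection \ref{subsection:simple groups}, where the relevant $\Lambda_i$ always contain comparable elements, the equivalence holds as stated.
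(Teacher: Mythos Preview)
The paper gives no detailed proof here, only the one-line remark that the result ``follows from the fact that the topology on $\textup{GL}_{\Lambda}(P)$ is given by the product topology.'' Your expansion of that hint is correct and complete for part (1) and for the sufficiency direction of part (2); these are exactly the coordinatewise arguments the paper is gesturing at.

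Your hesitation about the necessity direction of (2) is not a gap in your argument but a genuine defect in the statement. Concretely: take $P=\mathbb{Z}$ with the $p$-adic topology for an odd prime $p$. This is a commutative absolute topological ring (its unit group $\{\pm 1\}$ is discrete in the subspace topology, so inversion is trivially continuous), yet $P$ itself is not discrete. With $\Lambda=\{0\}$ a one-point poset and $\Lambda_1=\{0\}$, we have $\textup{GL}_\Lambda(P)=\{\pm 1\}$ and $N^{\Lambda}_{\{\Lambda_1\}}=\{1\}$, which is open, while $P$ is not discrete. So the ``only if'' in (2) fails exactly along the fault line you located: when every $\Lambda_i$ is an antichain with trivial equivalence classes, the only constraints are diagonal, and openness of $N$ detects only discreteness of $P^{*}$, not of $P$. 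A parallel failure occurs for the finiteness conclusion when $P^{*}=\{1\}$ and $\Lambda$ is a poset, since then every diagonal constraint $A_{s,s}=1$ is automatic and $N=\textup{GL}_\Lambda(P)$ regardless of how many $\Lambda_i$ there are.

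Your unipotent-embedding argument is the right tool and does establish the full equivalence whenever at least one $\Lambda_i$ contains a pair $t_1\precnapprox t_2$; in particular the corollary is valid in all the situations where the paper actually uses it. What you have written is therefore sharper than the paper's own treatment.
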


\subsection{The center}

For the next result,\ given $A\in \textup{M}_\Lambda(P)$,\ we say that $A$ is \textbf{scalar}\index{scalar matrix|ndx} if $A_{s,s}=A_{t,t}$ for every $t,s\in \Lambda$,\ and if $t\neq s$ then $A_{t,s}=0$. We will denote $Z(\textup{M}_\Lambda(P))$\index{center of a ring|ndx}\index{center of a group|ndx} as the center of the ring of matrices and $Z(\textup{GL}_\Lambda(P))$ the center of the group of units of matrices.

\begin{prop}\cite[Theorem 1.23]{Matrices1973}
Let $\Lambda$ be a finite irreducible proset and $P$ a commutative ring. Then $A\in Z(\textup{M}_\Lambda(P))$ if,\ and only if,\ $A$ is scalar.
\end{prop}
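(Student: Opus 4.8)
The plan is to prove both implications by commuting $A$ against a convenient family of elements of $\textup{M}_\Lambda(P)$, namely the matrix units. For each pair $u \preceq v$ in $\Lambda$, let $E_{u,v}$ denote the matrix with a $1$ in coordinate $(u,v)$ and $0$ elsewhere; since $u \preceq v$, each $E_{u,v}$ lies in $\textup{M}_\Lambda(P)$, and as $\Lambda$ is finite the matrix units span the ring over $P$. The easy direction is that a scalar matrix is central: if $A = cI$ for some $c\in P$, then for every $B$ one computes $(AB)_{s,t} = c\,B_{s,t}$ and $(BA)_{s,t} = B_{s,t}\,c$, which agree because $P$ is commutative. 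So the substance is in the converse.

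For the converse, first I would show that a central $A$ must be diagonal by commuting it with the diagonal idempotents $E_{u,u}$, which exist for every $u$ since $u \preceq u$ always holds. A direct computation gives $(AE_{u,u})_{s,t} = A_{s,u}\,\delta_{t,u}$ and $(E_{u,u}A)_{s,t} = \delta_{s,u}\,A_{u,t}$; equating these for all $s,t$ forces $A_{s,u}=0$ for $s\neq u$ and $A_{u,t}=0$ for $t\neq u$. Letting $u$ range over all of $\Lambda$ annihilates every off-diagonal entry, so $A$ is diagonal, say $A_{s,s}=d_s$.

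Next I would pin down the diagonal values. Testing a diagonal $A$ against $E_{u,v}$ for a comparable pair $u \preceq v$, the only possibly-nonzero coordinate of $AE_{u,v}-E_{u,v}A$ sits at $(u,v)$ and equals $d_u-d_v$; hence $d_u=d_v$ whenever $u\preceq v$. In other words the diagonal value is constant across any pair related by the preorder in either direction, which is exactly the relation defining $\mathcal{N}_1$. Now irreducibility enters: by Proposition \ref{prop:making prosets into disjoint parts} the components of $\Lambda$ are the sets $\mathcal{N}_\omega(s)$, so an irreducible $\Lambda$ satisfies $\mathcal{N}_\omega(s)=\Lambda$ for every $s$. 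Thus any two elements are joined by a finite chain of $\mathcal{N}_1$-steps, along which $d$ is constant, and therefore $d_s=d_t$ for all $s,t$; consequently $A=dI$ is scalar.

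The only real subtlety, and the step I would be most careful about, is this final one: the equalities $d_u=d_v$ are available only across comparable pairs, so deducing that all diagonal entries coincide genuinely requires the comparability ``graph'' on $\Lambda$ to be connected. This is precisely what irreducibility supplies through the identification of components with full neighbour sets $\mathcal{N}_\omega(s)$; on a reducible $\Lambda$ the conclusion fails, since one may place an independent scalar on each component. The proset setting adds only the minor wrinkle that equivalent-but-distinct elements $u\sim v$ must be treated, but these are comparable in both directions and are already covered by the same $E_{u,v}$ computation. Finiteness of $\Lambda$ is used solely to ensure the matrix units are honest elements of the ring and that all the products above are finite sums.
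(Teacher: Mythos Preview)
The paper does not supply a proof of this proposition; it is stated with a citation to \cite{Matrices1973} and left at that. Your matrix-unit argument is correct and is the standard route to this result. Two minor remarks: you never actually use that the $E_{u,v}$ span $\textup{M}_\Lambda(P)$, since you only exploit centrality to force $A$ to commute with each individual $E_{u,v}$ and then read off the consequences coordinatewise; and finiteness of $\Lambda$ is likewise inessential, since each $E_{u,v}$ already lies in $\textup{M}_\Lambda(P)$ for any locally finite $\Lambda$ and the relevant products are finite sums by local finiteness. So your proof in fact goes through in that greater generality.

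For comparison, the paper \emph{does} prove the companion statement for the group of units, Proposition~\ref{prop:first result on center}, and there the method is genuinely different: an induction on $|\Lambda|$, treating posets first and then reducing the proset case via an irreducible partial suborder. The reason for the divergence is simply that matrix units are not invertible, so your test elements are unavailable in $\textup{GL}_\Lambda(P)$; the inductive argument compensates by working only with invertible test matrices, at the cost of the extra hypothesis on $P^*$. Your approach is the cleaner one for the ring case precisely because the ring contains these idempotent test elements.
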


\begin{prop}\label{prop:first result on center}
Let $\Lambda$ be a finite irreducible proset and $P$ a commutative ring such that $\exists p_1,\ p_2\in P^*$ with $p_1-p_2\in P^*$. Then $A\in Z(\textup{GL}_\Lambda(P))$ if,\ and only if,\ $A$ is scalar.
\end{prop}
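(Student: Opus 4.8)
The plan is to prove both implications, using the companion result for the ring center (the preceding proposition, that central elements of $\textup{M}_\Lambda(P)$ are exactly the scalar matrices) for the easy direction and two explicit families of invertible matrices for the hard direction.

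For the forward implication, suppose $A$ is scalar. By the ring-center proposition $A$ lies in $Z(\textup{M}_\Lambda(P))$, so it commutes with every element of the ring, in particular with every element of the subset $\textup{GL}_\Lambda(P)$; hence $A\in Z(\textup{GL}_\Lambda(P))$. For the converse I assume $A\in Z(\textup{GL}_\Lambda(P))$ and first kill the off-diagonal entries. Fix $s_1\neq s_2$ with $s_1\preceq s_2$ (the only positions where $A_{s_1,s_2}$ can be nonzero, by the definition of $\textup{M}_\Lambda(P)$). Let $D$ be the diagonal matrix with $D_{s_1,s_1}=p_1$, $D_{s_2,s_2}=p_2$, and $D_{t,t}=p_1$ for all other $t$; since every diagonal entry is a unit, $D$ is invertible, so $D\in\textup{GL}_\Lambda(P)$. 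Comparing the $(s_1,s_2)$-entries of $AD$ and $DA$ gives $A_{s_1,s_2}(p_2-p_1)=0$, and since $p_2-p_1\in P^*$ this forces $A_{s_1,s_2}=0$. Thus $A$ is diagonal, say $A=\textup{diag}(a_s)_{s\in\Lambda}$.

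Next I would show that all diagonal entries coincide, using matrix units together with irreducibility. For a comparable pair $s_1\preceq s_2$ with $s_1\neq s_2$, the matrix $I+E_{s_1,s_2}$ (where $E_{s_1,s_2}$ is the matrix unit with $1$ in position $(s_1,s_2)$ and zeros elsewhere) lies in $\textup{M}_\Lambda(P)$ and is invertible with inverse $I-E_{s_1,s_2}$, because $E_{s_1,s_2}^2=0$ whenever $s_1\neq s_2$; this holds even when $s_1$ and $s_2$ are $\preceq$-equivalent, so $I+E_{s_1,s_2}\in\textup{GL}_\Lambda(P)$. Since $A$ is diagonal, comparing the $(s_1,s_2)$-entries of $A(I+E_{s_1,s_2})$ and $(I+E_{s_1,s_2})A$ yields $a_{s_1}=a_{s_2}$. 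Finally, because $\Lambda$ is irreducible we have $\Lambda=\mathcal{N}_\omega(u)$ for each $u$, so any two elements are joined by a finite chain in which consecutive elements lie in $\mathcal{N}_1$ of one another, i.e.\ are comparable; applying the previous step along such a chain propagates equality and shows $a_u=a_v$ for all $u,v$. Hence $A$ is scalar.

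The main obstacle is the converse, and within it the delicate point is verifying that the auxiliary matrices genuinely lie in $\textup{GL}_\Lambda(P)$ rather than merely in the ambient ring: the diagonal matrix $D$ must have only unit entries, and the elementary matrix $I+E_{s_1,s_2}$ must remain invertible even when $s_1,s_2$ are $\preceq$-equivalent (which is where I use $E_{s_1,s_2}^2=0$ and the fact that $s_1\preceq s_2$ keeps $I-E_{s_1,s_2}$ in $\textup{M}_\Lambda(P)$). The hypothesis $p_1-p_2\in P^*$ is exactly what lets me pass from $A_{s_1,s_2}(p_2-p_1)=0$ to $A_{s_1,s_2}=0$; over a ring admitting no two units whose difference is a unit (for instance $\mathbb{F}_2$), this step, and indeed the statement itself, can fail.
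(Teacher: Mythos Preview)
Your argument is correct and is in fact cleaner than the paper's. The paper proceeds by induction on $|\Lambda|$ in the poset case: it treats $\Lambda=\mathbf{2}\!<$ explicitly by writing out the three scalar equations coming from $AB=BA$, then for $|\Lambda|=n+1$ it restricts to proper subintervals to force all but the extreme entry $A_{0,n}$ to behave, and handles that last entry by a computation analogous to the base case; finally it reduces the proset case to the poset case via an irreducible partial suborder. Your route avoids induction entirely by producing two concrete families of test matrices in $\textup{GL}_\Lambda(P)$---diagonal matrices with unit entries $p_1,p_2$ to kill $A_{s_1,s_2}$ for each comparable pair, and elementary unipotents $I+E_{s_1,s_2}$ to equate the diagonal entries along comparable pairs---then invokes irreducibility only once, at the end, to chain the equalities together. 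This is more direct, handles posets and prosets uniformly, and makes the role of the hypothesis $p_1-p_2\in P^*$ completely transparent (it is used exactly once, to cancel in $A_{s_1,s_2}(p_2-p_1)=0$). The paper's inductive approach, by contrast, buries the same cancellation inside each inductive step. One cosmetic point: you have the labels ``forward'' and ``converse'' swapped relative to the statement as written, but the mathematics is right.
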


\begin{proof}
First we will use induction to prove for the case $\Lambda$ is a poset. We then use it to generalize for the prosets case.\\
\indent Given that $|\Lambda|=1$,\ both results follow trivially,\ as every element is scalar. If $|\Lambda|=2$,\ as $\Lambda$ is irreducible then $\Lambda=\mathbf{2}<$. Let $A\in \textup{M}_\Lambda(P)$. Then $A$ can be written as follows.
$$A:=\begin{pmatrix}
A_{0,0} & A_{0,1} \\
0 & A_{1,1}
\end{pmatrix}.$$
Assume now that $A\in Z(\textup{GL}_\Lambda(P))$. Hence,\ given $B\in \textup{GL}_{\Lambda}(P)$ any element,\ the following equations are true for every $B_{0,0},B_{0,1},B_{1,1}\in P$
$$ \left\{ \begin{array}{l}
A_{0,0}B_{0,0}=B_{0,0}A_{0,0}\\
A_{1,1}B_{1,1}=B_{1,1}A_{1,1}\\
B_{0,0}A_{0,1}+B_{0,1}A_{1,1}=A_{0,0}B_{0,1}+A_{0,1}B_{1,1}.
\end{array} \right. $$
\indent By making $B_{0,0}=B_{1,1}=B_{0,1}=1$ we get $A_{0,0}=A_{1,1}$. On the other hand,\ by fixing $B_{0,0}=B_{1,1}=1$ and letting $B_{0,1}\in P$ any element,\ it follows that $A_{0,0},\ A_{1,1}\in P^*$. The third equation also tell us that for every $p_1,\ p_2\in P^*$ we have that $A_{0,1}p_1=A_{0,1}p_2$. Hence $A_{0,1}(p_1-p_2)=0$. Using $p_1,\ p_2$ from the assumption and using the fact that units cannot be zero divisors,\ it follows that $A_{0,1}=0$.\\
\indent Assume that for every $k\leqslant n$ it is true that if $|\Lambda|=k$ and $\Lambda$ is an irreducible poset than the statements are true.\\
\indent Let $|\Lambda|=n+1$. Let $s_1\leqslant s_2$. If $|[s_1,\ s_2]|\leq n$,\ then the result follows on the submatrix with coordinates on $[s_1,\ s_2]$ by induction. Hence the result is always true when $|\Lambda|\leqslant n+1$ and $\Lambda\neq \mathbf{n+1}<$.\\
\indent If $\Lambda:=\mathbf{n+1}<$,\ by looking at the submatrices indexed by $\{0,1,\ldots,n-1\}$ and $\{1,2,\ldots,n\}$ it only remains to show $A_{0,\ n}=0$. Given $A\in \textup{M}_{\Lambda}(P)$ then,\ if $AB=BA$ the coordinate $A_{0,\ n}$ implies that
$$\sum_{0\leq i\leq n} A_{0,i}B_{i, n} = \sum_{0\leq i\leq n}B_{0,i}A_{i, n}$$
\indent By the induction argument,\ as $A\in Z(\textup{M}_{\Lambda}(P))$ we have that $A_{0,0}=A_{n,n}\in Z(P)$ and for $i\neq j$ and $(i,\ j)\neq(0,\ n)$ we have that $A_{i,j}=0$. Hence the equation is reduced to
\begin{equation}\label{eq:same thing}
    A_{0,0}B_{0,\ n}+A_{0,n}B_{n,n}=B_{0,0}A_{0, n}+B_{0, n}A_{n,n}
\end{equation}
By letting $B_{0,0}=B_{n,n}=B_{0,n}=1$ it follows that $A_{0,0}=A_{n,n}$. On the other hand,\ by fixing $B_{0,0}=B_{1,1}=1$ and letting $B_{0,n}\in P$ any element,\ it follows that $A_{0,0},\ A_{n,n}\in P^*$. The third equation also tell us that for every $p_1,\ p_2\in P^*$ we have that $A_{0,n}p_1=A_{0,n}p_2$. Hence $A_{0,n}(p_1-p_2)=0$. Using $p_1,\ p_2$ from the assumption and using the fact that units cannot be zero divisors,\ it follows that $A_{0,n}=0$.\\
\indent For the proset case,\ denote the proset as $(\Lambda,\  \preceq)$. Notice that every scalar element $A$ with coordinates in $P^*$ belongs to $\textup{GL}_{(\Lambda,\  \preceq)}(P)$. It is also the case that for any irreducible partial suborder $\leqslant$ of $  \preceq$ (Definition \ref{irreducible partial suborder}),\ if $A\in Z(\textup{GL}_{(\Lambda, \leqslant)})$ then $A$ is scalar. Hence,\ $A$ can only belong to the center of $\textup{GL}_{(\Lambda,\  \preceq)}$ if it is scalar. 
\end{proof}

Note that the group part of the result is not true for the case $\Lambda=\mathbf{2}<$ and $\mathbb{F}_2$,\ as $Z(\textup{M}_\mathbf{2}(\mathbb{F}_2))$ has only scalar matrices but  $\textup{GL}_\mathbf{2}(\mathbb{F}_2)\cong \mathbb{F}_2$ is an abelian group. This also gives us an example that given $P$ a non-commutative ring,\ it is not necessarily true that $Z(P)\cap P^*=Z(P^*)$.

\begin{coro}
Let $\Lambda$ be an irreducible proset and $P$ a commutative,\ absolute topological ring. Then $Z(\textup{M}_{\Lambda}(P))\cong P$. If there are $p_1,\ p_2\in P^*$ such that $p_1-p_2\in P^*$ then $Z(\textup{GL}_\Lambda(P))\cong P^*$.
\end{coro}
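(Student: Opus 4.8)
The plan is to reduce both isomorphisms to their finite counterparts---the cited finite ring result (\cite[Theorem 1.23]{Matrices1973}) and Proposition \ref{prop:first result on center}---by restricting a central element to intervals, and then to globalise the resulting diagonal information using the irreducibility of $\Lambda$.

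First I would record the easy inclusions. The scalar matrices $cI$ with $c\in P$ form a subring of $\textup{M}_\Lambda(P)$ contained in $Z(\textup{M}_\Lambda(P))$, and $c\mapsto cI$ is a ring isomorphism onto it; restricting to $c\in P^*$ gives central scalar matrices in $\textup{GL}_\Lambda(P)$, so $P^*\hookrightarrow Z(\textup{GL}_\Lambda(P))$. Since the topology on $\textup{M}_\Lambda(P)$ is the product topology, a net $c_\alpha I$ converges to $cI$ in the subspace topology exactly when $c_\alpha\to c$ in $P$, so these embeddings are homeomorphisms onto their images and the isomorphisms are topological.

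For the reverse inclusions, fix $A$ in the relevant centre and any comparable pair $s\preceq t$. The interval $[s,t]$ is finite by local finiteness and is irreducible, since every $w\in[s,t]$ lies in $\mathcal{N}_1(s)$, making $[s,t]$ a single component. As $[s,t]$ is convex, coordinate restriction $A\mapsto A|_{[s,t]}$ is a surjective ring homomorphism $\textup{M}_\Lambda(P)\to\textup{M}_{[s,t]}(P)$ (surjective by extension by zero), and on the group level it is the quotient by $N^\Lambda_{[s,t]}$ from Notation \ref{nota:normal subgroups}. The image of a central element under a surjection is central, so $A|_{[s,t]}$ lies in $Z(\textup{M}_{[s,t]}(P))$, respectively $Z(\textup{GL}_{[s,t]}(P))$. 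Applying the finite ring result, respectively Proposition \ref{prop:first result on center} (whose hypothesis $p_1-p_2\in P^*$ we now assume), $A|_{[s,t]}$ is scalar; in particular $A_{s,t}=0$ and $A_{s,s}=A_{t,t}$. Letting $s\preceq t$ range over all comparable pairs kills every admissible off-diagonal entry (positions with $s\npreceq t$ are already $0$ by definition of $\textup{M}_\Lambda(P)$) and shows the diagonal is constant along comparabilities.

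The final step globalises the diagonal: for arbitrary $s,t\in\Lambda$, irreducibility gives $t\in\mathcal{N}_\omega(s)$, hence a finite chain $s=u_0,u_1,\ldots,u_n=t$ with consecutive terms comparable, along which the diagonal is constant, so $A_{s,s}=A_{t,t}$. Thus $A=cI$ is scalar; in the group case Proposition \ref{prop:conditions to have inverse on lambda} applied to the submatrix on $[s,s]$ forces $c\in P^*$. This yields $Z(\textup{M}_\Lambda(P))\cong P$ and $Z(\textup{GL}_\Lambda(P))\cong P^*$. I expect the main obstacle to be the two structural facts feeding the reduction---that restriction to a convex interval is a surjective homomorphism and that every interval is irreducible---since these are exactly what let the finite theorems apply verbatim; once they are in place the chaining argument from irreducibility is routine.
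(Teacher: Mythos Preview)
Your proposal is correct and is essentially the argument the paper has in mind: the corollary is stated without proof, and the intended deduction is exactly the reduction you describe---restrict a central element to each finite interval $[s,t]$ via the surjective ring (resp.\ group) homomorphism of Notation \ref{nota:normal subgroups}, invoke the finite irreducible case (\cite[Theorem 1.23]{Matrices1973} and Proposition \ref{prop:first result on center}) to see the restriction is scalar, and then chain along comparabilities using irreducibility of $\Lambda$ to make the diagonal globally constant. The two structural facts you flag (surjectivity of restriction to a convex set and irreducibility of intervals) are routine, so there is no real obstacle.
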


\subsection{Building simple topological groups}\label{subsection:simple groups}

\begin{defi}\index{almost general linear group|ndx}\index{$\textup{aGL}_\Lambda(P)$|ndx}
Given $\Lambda$ a poset and $P$ a ring,\ define the \textbf{almost general linear group} of $P$ in relation to $\Lambda$ as
$$\textup{aGL}_\Lambda(P):=\left\{A\in {\textup{GL}_{\Lambda+S}} \text{ for some } S\subset\Lambda \text{ finite} \right\}.$$
In other words,\ the set of all matrices that accept finitely many non-zero entries without relation in $\Lambda$.
\end{defi}

\begin{ex}
Let $\Lambda=\mathbf{N}$. Then $A\in \textup{aGL}_\mathbf{N}(P)$ if,\ and only if,\ $A\in \textup{GL}_{\mathbf{N}+\{0,\ 1,\ldots,\ n\}}(P)$ for some $n\in\mathbb{N}$.
\end{ex}

\begin{prop}\label{prop:conditions for atri to be elementary}
Let $\Lambda$ be a poset and $P$ a ring. The group $\textup{aGL}_\Lambda(P)$ is equal to the direct limit $\varinjlim_{S\subset \Lambda\text{finite}}\textup{GL}_{\Lambda+S}(P)$.
\end{prop}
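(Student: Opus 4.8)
The plan is to realize $\textup{aGL}_\Lambda(P)$ as the union of the groups $\textup{GL}_{\Lambda+S}(P)$ taken over finite $S\subseteq\Lambda$, and then to show that this union carries exactly the structure of the direct limit of the associated directed system. First I would note that the finite subsets of $\Lambda$, ordered by inclusion, form a directed poset: given finite $S_1,S_2\subseteq\Lambda$, the set $S_1\cup S_2$ is again finite and contains both. The transition maps of the system will be the natural inclusions $\textup{GL}_{\Lambda+S}(P)\hookrightarrow\textup{GL}_{\Lambda+S'}(P)$ for $S\subseteq S'$, and the substance of the argument is to verify that these are genuine injective group homomorphisms.

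The key lemma is that for $S\subseteq S'$ the identity on matrix entries realizes $\textup{M}_{\Lambda+S}(P)$ as a unital subring of $\textup{M}_{\Lambda+S'}(P)$. Since enlarging the distinguished set only enlarges the added clique relations, one has $\preceq_{+,S}\,\subseteq\,\preceq_{+,S'}$ as relations on $\Lambda$; hence every matrix supported on $\preceq_{+,S}$ is also supported on $\preceq_{+,S'}$, giving the set-theoretic inclusion $\textup{M}_{\Lambda+S}(P)\subseteq\textup{M}_{\Lambda+S'}(P)$. The subtle point, which I expect to be the main obstacle, is that the two rings compute products by summing over a priori different intervals $[s_1,s_2]_{\preceq_{+,S}}\subseteq[s_1,s_2]_{\preceq_{+,S'}}$. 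I would show these products agree on $\textup{M}_{\Lambda+S}(P)$: any index $t$ lying in the larger interval but not the smaller contributes a term $A_{s_1,t}B_{t,s_2}$ that vanishes, because $A,B\in\textup{M}_{\Lambda+S}(P)$ forces $s_1\preceq_{+,S}t\preceq_{+,S}s_2$ whenever both factors are nonzero, which would place $t$ in the smaller interval after all. The diagonal identity is common to both rings, so the inclusion is unital.

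With the lemma in hand, an element $A\in\textup{GL}_{\Lambda+S}(P)$ has an inverse $A^{-1}\in\textup{M}_{\Lambda+S}(P)$ which, by the matching of products, remains a two-sided inverse inside $\textup{M}_{\Lambda+S'}(P)$; thus $A\in\textup{GL}_{\Lambda+S'}(P)$ and the inclusion restricts to an injective homomorphism on units. These inclusions are visibly compatible with composition, so the groups $\textup{GL}_{\Lambda+S}(P)$ form a directed system of injections. Finally I would invoke the standard description of the direct limit of such a system: since all transition maps are injective, $\varinjlim_{S}\textup{GL}_{\Lambda+S}(P)$ is canonically the union $\bigcup_{S\subseteq\Lambda\text{ finite}}\textup{GL}_{\Lambda+S}(P)$, which is precisely $\textup{aGL}_\Lambda(P)$ by definition. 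It remains only to observe that this union is closed under products and inverses, since any two of its elements lie in a common $\textup{GL}_{\Lambda+(S_1\cup S_2)}(P)$, confirming that it is a group and completing the identification.
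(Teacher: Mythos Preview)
Your proof is correct and follows essentially the same approach as the paper: both set up the directed system of inclusions $\textup{GL}_{\Lambda+S}(P)\hookrightarrow\textup{GL}_{\Lambda+S'}(P)$ over finite subsets and identify the limit with $\textup{aGL}_\Lambda(P)$. If anything, you are more careful than the paper in verifying that the coordinate-identity map is a genuine ring homomorphism (your argument that the extra interval terms vanish), a point the paper simply asserts; the paper in turn phrases the final identification via the universal property and a commutative diagram rather than the concrete ``direct limit of injections is the union'' description you use.
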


\begin{proof}
For each $S$ finite subsets of $\Lambda$,\ define $j_S:\textup{GL}_{\Lambda}\rightarrow \textup{GL}_{\Lambda+S}$ the embedding such that $A_{s_1,s_2}=(j_S(A))_{s_1,s_2}$. If $S_1\subset S_2$ are subsets of $\Lambda$,\ define $j_{S_1\rightarrow S_2}:\textup{GL}_{\Lambda+S_1}\rightarrow \textup{GL}_{\Lambda+S_2}$ the embedding such that $A_{s_1,s_2}=(j_{S_1\rightarrow S_2}(A))_{s_1,s_2}$. These morphisms are such that if $S_1\subset S_2\subset S_3$ are finite subsets,\ then $j_{S_1\rightarrow S_3}=j_{S_2\rightarrow S_3}\circ j_{S_1\rightarrow S_2}$ and $j_{S_1}=j_{S_2}\circ j_{S_1\rightarrow S_2}$. Hence the direct limit of this system,\ $\varinjlim \textup{GL}_{\Lambda+S}(P)$ exist. We now need to prove $\varinjlim \textup{GL}_{\Lambda+S}(P)=\textup{aGL}_{\Lambda}(P)$.\\
\indent For each $S$ finite subset of $\Lambda$,\ define $i_S:\textup{GL}_{\Lambda+S}\rightarrow \textup{aGL}_\Lambda(P)$ the embedding such that $A_{s_1,s_2}=(i_{S}(A))_{s_1,s_2}$. Notice that these morphisms are such that the outer triangle of the following diagram commutes:

$$
\begin{tikzcd}
                                                                                            &  & \textup{aGL}_\Lambda(P)                                                   &  &                                                                                                                                                \\
                                                                                            &  &                                                                  &  &                                                                                                                                                \\
                                                                                            &  & \varinjlim \textup{GL}_{\Lambda+S}(P) \arrow[uu, "i" description, dashed] &  &                                                                                                                                                \\
\textup{GL}_{\Lambda+S_2}(P) \arrow[rruuu, "i_{S_2}" description] \arrow[rru, "j_{S_2}" description] &  &                                                                  &  & \textup{GL}_{\Lambda+S_1}(P) \arrow[lluuu, "i_{S_1}" description] \arrow[llu, "j_{S_1}" description] \arrow[llll, "j_{S_1\rightarrow S_2}" description]
\end{tikzcd}$$
\noindent Hence,\ there exists an unique $i:\varinjlim \textup{GL}_{\Lambda+S}(P)\rightarrow \textup{aGL}_\Lambda(P)$. By definition of all the morphisms and of $\textup{aGL}_\Lambda(P)$ it follows that $i$ is an isomorphism.
\end{proof}

The following corollary follows directly from the last proposition.

\begin{coro}
Let $\Lambda$ be a poset and $P$ a ring. If $\Lambda$ is countable,\ $\Lambda=\{s_n\}_{n\in\mathbb{N}}$ is an enumeration then $\textup{aGL}_\Lambda(P)=\bigcup_{n\in\mathbb{N}}\textup{GL}_{\Lambda+\{s_1, s_2, \ldots, s_n\}}(P)$. If $P$ is an absolute topological ring,\ $\textup{aGL}_{\Lambda}(P)$ is a topological group under the final topology.
\end{coro}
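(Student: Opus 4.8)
The plan is to deduce both assertions from Proposition \ref{prop:conditions for atri to be elementary}, which identifies $\textup{aGL}_\Lambda(P)$ with the direct limit $\varinjlim_{S}\textup{GL}_{\Lambda+S}(P)$ taken over all finite $S\subseteq\Lambda$, together with the coordinate-preserving embeddings $j_{S_1\rightarrow S_2}$ and $i_S$ constructed in its proof. The two parts are the set-theoretic equality and the topological statement, and I would treat them in that order.

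For the equality, the key point is cofinality. Since $\Lambda=\{s_n\}_{n\in\mathbb{N}}$ is countable, every finite $S\subseteq\Lambda$ lies in some initial segment $S_n:=\{s_1,\ldots,s_n\}$, so the chain $S_1\subseteq S_2\subseteq\cdots$ is cofinal in the directed poset of finite subsets of $\Lambda$ under inclusion. Each $\Lambda+S_n$ is locally finite by Corollary \ref{prop:adding finite sets hence finite interval}, so every $\textup{GL}_{\Lambda+S_n}(P)$ is defined, and the maps $j_{S_n\rightarrow S_{n+1}}$ realize $\{\textup{GL}_{\Lambda+S_n}(P)\}_n$ as a cofinal subsystem of the full direct system. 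Because a direct limit is computed by any cofinal subsystem and because all transition maps are injective and coordinate-preserving, the limit is simply the increasing union. Concretely, the inclusion $\bigcup_n\textup{GL}_{\Lambda+S_n}(P)\subseteq\textup{aGL}_\Lambda(P)$ is immediate from the definition, and conversely, given $A\in\textup{GL}_{\Lambda+S}(P)$ one chooses $n$ with $S\subseteq S_n$ and notes that $j_{S\rightarrow S_n}(A)$ has the same entries as $A$ and lies in $\textup{GL}_{\Lambda+S_n}(P)$.

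Now set $G_n:=\textup{GL}_{\Lambda+S_n}(P)$ and $G:=\textup{aGL}_\Lambda(P)$. When $P$ is an absolute topological ring, each $G_n$ is a topological group by the earlier Proposition showing $\textup{GL}_\Lambda(P)$ is a topological group closed in $\prod_{\Lambda\times\Lambda}P$, and each inclusion $G_n\hookrightarrow G_{n+1}$ is a continuous injective homomorphism with \emph{closed} image: the image is the common zero locus of the coordinates permitted by the larger order $\preceq_{+,S_{n+1}}$ but not by $\preceq_{+,S_n}$, and each such condition is closed since $P$ is Hausdorff. Equip $G$ with the final topology for the inclusions $G_n\hookrightarrow G$, so that $U\subseteq G$ is open exactly when every $U\cap G_n$ is open in $G_n$. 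Continuity of inversion is then immediate from the universal property: inversion preserves each $G_n$ and is continuous there, so $\iota^{-1}(U)\cap G_n$ is open in $G_n$ for all $n$, whence $\iota^{-1}(U)$ is open. Multiplication likewise restricts on each $G_n\times G_n$ to the continuous multiplication of $G_n$, which lands in $G_n$.

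The main obstacle is continuity of multiplication $m:G\times G\to G$. The universal property only makes $m$ continuous for the final topology induced on $G\times G$ by the subspaces $G_n\times G_n$, so the crux is to show that this colimit topology agrees with the product topology on $G\times G$. This identification fails for arbitrary ascending unions of topological groups, so it is the one genuinely topological step and carries the real content of the corollary. I would establish it in the present countable, strict setting from the facts that each $G_n$ is a closed subspace of $\prod_{\Lambda\times\Lambda}P$ and that the chain of closed embeddings is strict, appealing to the theory of countable strict inductive limits of topological groups; equivalently, one verifies that $G$ and $G\times G$ are $k_\omega$-spaces, so that passing to the colimit commutes with the finite product. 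Granting this, $m$ is continuous by the universal property and $G$ is a topological group.
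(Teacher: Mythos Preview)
Your cofinality argument for the set-theoretic equality is correct and is precisely what the paper's one-line ``follows directly from the last proposition'' leaves implicit.

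For the topological statement you go well beyond the paper, which offers no argument at all beyond that same sentence. You are right that continuity of multiplication for the Top-colimit topology is the only nontrivial point, and right that it reduces to interchanging the colimit with the binary product. But your proposed resolution does not work in the stated generality: the $k_\omega$ hypothesis is simply unavailable for an arbitrary absolute topological ring. Already for $P=\mathbb{R}$ and $\Lambda$ countably infinite, each $G_n=\textup{GL}_{\Lambda+S_n}(\mathbb{R})$ sits as a closed subspace of the metrizable but non-$\sigma$-compact space $\mathbb{R}^{\Lambda\times\Lambda}$, so $G_n$ is neither locally compact nor $k_\omega$, and the strict inductive limit $G$ has no reason to be $k_\omega$ either. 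The standard theorems guaranteeing that a countable strict inductive limit of topological groups is again a topological group all require extra hypotheses---typically local compactness of the steps, or that each $G_n$ be open in $G_{n+1}$---and neither holds here when $P$ is not discrete (the image of $G_n$ in $G_{n+1}$ is cut out by the vanishing of finitely many coordinates, hence closed, not open). In fairness, the paper does not close this gap either; the most charitable reading of ``the final topology'' is the finest \emph{group} topology making the inclusions continuous, for which the assertion is tautological. If instead the Top-colimit is intended, the concern you raise is legitimate and neither your argument nor the paper's one-line proof settles it.
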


For the next result,\ we use the following notation:
\begin{itemize}
    \item Let $S$ be a finite set with cardinality $n$ and $F$ a field. We denote $SL_{\overline{S}}(F)\leqslant \textup{GL}_{\overline{S}}(F)$ the square matrices indexed by $S$ with determinant $1$
    \item Given $\Lambda$ an irreducible proset and $S\subset \Lambda$ a finite set,\ define $(\overline{S},\  \preceq_+)\subset \Lambda$ the minimal convex subset containing $S$ with order given by all elements being equivalent.
\end{itemize}

\begin{theorem}\label{theorem:simple matrices}\index{topologically simple group|ndx}
Let $F$ be a topological field and $\Lambda$ an infinite, locally finite proset. Then $Z_\Lambda(F):=Z(\textup{aGL}_{\Lambda}(F))$ is the unique largest proper closed normal subgroup of $\textup{aGL}_\Lambda(F)$. In particular,\ $\textup{aPGL}_{\Lambda}(P):=\textup{aGL}_\Lambda(F)/Z_\Lambda(F)$ is topologically simple.
\end{theorem}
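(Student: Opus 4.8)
The plan is to reduce everything to the single claim: every closed normal subgroup $N\trianglelefteq \textup{aGL}_\Lambda(F)$ with $N\not\subseteq Z_\Lambda(F)$ equals $\textup{aGL}_\Lambda(F)$. Granting this, the theorem follows formally. First, $Z_\Lambda(F)$ is normal and, being the center of a Hausdorff topological group, closed (it is the intersection over $h$ of the closed centralizers $C(h)$); it is proper because $\textup{aGL}_\Lambda(F)$ contains a copy of the nonabelian group $\textup{GL}_m(F)$, $m\geq 2$, as a block subgroup (here $\Lambda$ infinite is used). The claim then says any proper closed normal subgroup is contained in $Z_\Lambda(F)$, so $Z_\Lambda(F)$ is the unique largest one; and since closed normal subgroups of $\textup{aGL}_\Lambda(F)/Z_\Lambda(F)$ correspond to closed normal subgroups of $\textup{aGL}_\Lambda(F)$ containing $Z_\Lambda(F)$, the only ones are $Z_\Lambda(F)$ and the whole group, giving topological simplicity.

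To prove the claim, fix $g\in N\setminus Z_\Lambda(F)$. Since every scalar matrix $dI$ ($d\in F^*$) is central, $g$ is non-scalar, so there is a finite $S_0\subset\Lambda$ whose coordinates witness non-scalarity (a nonzero off-diagonal entry, or two distinct diagonal entries). The engine is Dickson's Lemma \ref{Lemma:how to make the simple matrices} applied through the block quotients of Notation \ref{nota:normal subgroups}: for any finite $S$ the passage to $\Lambda+S$ turns $S$ into a finite equivalence block $\overline S$, and restriction to $\overline S\times\overline S$ gives a surjective homomorphism $\pi_S\colon \textup{GL}_{\Lambda+S}(F)\twoheadrightarrow \textup{GL}_{\overline S}(F)\cong \textup{GL}_m(F)$ (convexity of $\overline S$ makes this multiplicative). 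For every finite $S\supseteq S_0$ with $m=|\overline S|\geq 3$, the image $\pi_S\big(N\cap \textup{GL}_{\Lambda+S}(F)\big)$ is a normal subgroup of $\textup{GL}_m(F)$ containing the non-scalar element $\pi_S(g)$; hence it is noncentral, and Lemma \ref{Lemma:how to make the simple matrices} forces $\textup{SL}_{\overline S}(F)\subseteq \pi_S\big(N\cap \textup{GL}_{\Lambda+S}(F)\big)$.

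The key point, and the step I expect to be the main obstacle, is that this yields only that $N$ \emph{surjects} onto each block $\textup{SL}_{\overline S}(F)$, not that $N$ contains these blocks: the lifts of block elements carry uncontrolled entries outside $\overline S$, and one cannot in general localize them, since rows and columns of elements of $\textup{GL}_\Lambda(F)$ may have infinite support. I would bypass this entirely using closedness and the product topology (Remark \ref{remark:nets and open}). Given an arbitrary $A\in \textup{aGL}_\Lambda(F)$ and a basic neighbourhood constraining a finite window $W$, choose a finite block $\overline S\supseteq W\cup S_0$ with $m\geq 3$ and $\overline S\setminus W\neq\emptyset$. Set $\bar A=\pi_S(A)\in \textup{GL}_m(F)$ and rescale the single row indexed by some $r\in\overline S\setminus W$ so as to force determinant $1$; this produces $\bar A'\in \textup{SL}_{\overline S}(F)$ agreeing with $\bar A$ on $W\times W$ (only entries in row $r$, none of them in $W\times W$, are altered).

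By the previous paragraph there is $n\in N$ with $\pi_S(n)=\bar A'$, and then $n$ agrees with $A$ on $W\times W$, so $n$ lies in the chosen neighbourhood. Thus every neighbourhood of $A$ meets $N$; as $N$ is closed, $A\in N$, whence $N=\textup{aGL}_\Lambda(F)$. The determinant-balancing on a fresh in-block coordinate, available precisely because $\Lambda$ is infinite, is exactly what converts the finite-dimensional simplicity of Lemma \ref{Lemma:how to make the simple matrices} into the infinite statement, and the reliance on surjectivity rather than containment is what makes the infinite support harmless.
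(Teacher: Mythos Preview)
Your argument is correct and follows the same strategy as the paper: Dickson's Lemma~\ref{Lemma:how to make the simple matrices} on finite blocks shows $\mathrm{SL}_{\overline S}(F)$ lies in the image of $N$, a determinant-balancing trick upgrades this to $\mathrm{GL}_{\overline S}(F)$, and density in the product topology then forces $N=\textup{aGL}_\Lambda(F)$. The only differences are in execution: the paper balances determinants by passing to a strictly larger block $\overline{S''}\supsetneq\overline{S'}$ and adjusting diagonal entries there, whereas you rescale a single row inside the block at a coordinate outside the observation window; and you make the role of closedness explicit via the product-topology density argument, which is arguably cleaner than the paper's phrasing (it asserts $\textup{GL}_{\Lambda+S}(F)\le M$ outright and invokes the direct-limit description, though as the paper itself notes immediately afterward, $\textup{aGL}_\Lambda(F)$ has dense proper normal subgroups, so some topological input is indeed required).
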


\begin{proof}
The group $Z_\Lambda(F)$ is clearly closed and normal in $\textup{aGL}_\Lambda(F)$. To complete the proof we will show that if $M$ is a normal subgroup of $\textup{aGL}_\Lambda(F)$ is not contained in $Z_\Lambda(F)$,\ then $M=\textup{aGL}_\Lambda(F)$. For the proof we will look at the intersection of $M$ with the subgroups of $\textup{aGL}_{\Lambda}(F)$ of the form $\textup{GL}_{\Lambda+S}(F)$,\ for $S$ a finite subset of $\Lambda$,\ and show all the elements of $\textup{GL}_{\Lambda+S}(F)$ are in $M$.\\
\indent Fix such $M$ and let $A\in M$ which is not scalar. Let $S$ be a finite set containing $\{s,t \in \Lambda:\ A_{s,t}\neq 0 \textit{ and } s\npreceq t\}$. Then $A$ can be seen as an element in $\textup{GL}_{\Lambda+S}(F)$.\\
\indent Now,\ let $N^{\Lambda+S}_{S}(F)$ the normal subgroup of $\textup{GL}_{\Lambda+S}(F)$ as in Notation \ref{defi:normal subgroups}. For each $S\subset S'\subset \Lambda$ finite convex subset of $\Lambda+S$,\ define: 
$$\textup{M}_{S'}:=(\textup{GL}_{\Lambda+S}(F)\cap M)/N^{\Lambda+S}_{S'}.$$
That can be seen as a normal subgroup of $\textup{GL}_{\overline{S'}}(F)$. Notice that as $AN_{S'}^{\Lambda+S}$ is a non-central element in $\textup{GL}_{\overline{S'}}(F)$,\ hence $\textup{M}_{S'}$ is non-central in $\textup{GL}_{\overline{S'}}(F)$. By Lemma \ref{Lemma:how to make the simple matrices} we have that $SL_{\overline{S'}}(F)\leqslant \textup{M}_{S'}$. We now want to show $\textup{M}_{S'}=\textup{GL}_{\overline{S'}}(F)$.\\
\indent Given $A'\in \textup{GL}_{\overline{S'}}(F)$,\ there is $B$ an element of $\textup{GL}_{\Lambda+S'}(F)$ such that for
$$B_{i,j}=\left\{\begin{array}{ll}
A'_{i,j}     &  \mbox{if }i,j\in S',\\
0     & \mbox{if } i,j\notin S'\mbox{ and }i\neq j,\\
\mbox{is an element of }P^* & \mbox{if }i,j\notin S' \mbox{ and }i=j.
\end{array}\right.$$
Moreover,\ by changing the coordinates $(i,i)$ for $i\notin S'$,\ $B$ may be chosen with $BN^{\Lambda+S'}_{S''}$ is an element $SL_{S''}(F)$ for some finite convex subset $S''$ of $\Lambda+S'$ that strictly contains $\overline{S'}$.\\
\indent Let $A$ the fixed non-scalar element in $M$. Let $A'=AN^{\Lambda+S}_{S'}\in \textup{GL}_{\overline{S'}}(F)$. Let $B$ be the element in $SL_{S''}(F)$ generated in relation to $A'$,\ as seen above. Since $N_{S''}\geqslant SL_{S''}(F)$ and $\textup{GL}_{\overline{S'}}(F)$ can be seen as a subgroup of $SL_{S''}(F)$,\ the previous argument and the fact $A=BN^{\Lambda+S}_{S'}$ implies $\textup{M}_{S'}=\textup{GL}_{\overline{S'}}(F)$. Hence, for every $S\subset \Lambda$ finite subset, $\textup{GL}_{\Lambda+S}(F)\leqslant M$. Proposition \ref{prop:conditions for atri to be elementary} then implies $M=\textup{aGL}_{\Lambda}(F)$.\\
\indent In particular,\ any nontrivial closed normal subgroup of $\textup{aGL}_\Lambda(F)/Z_\Lambda(F)$ has preimage equal to $\textup{aGL}_\Lambda(F)$ and $\textup{aGL}_\Lambda(F)/Z_\Lambda(F)$ is topologically simple.
\end{proof}
Notice that if $\Lambda$ is a finite proset,\ then: 
$$\textup{aGL}_\Lambda(G)/Z_\Lambda(F)=\textup{PGL}_{|\Lambda|}(F);$$
the latter group fails to be topologically simple if $|F|>2$,\ because the determinant homomorphism.\\
\indent One can see that this construction can make simple topological groups of large cardinalities over different fields. Under some conditions the group created can have an interesting topology,\ for example,\ if your field is finite with discrete topology,\ the group generated will be a totally disconnected locally compact group. In general,\ if the field has some property $P$ on the topology then this simple group will be locally pro-$P$.\\
\indent For $\Lambda$ a preordered set and $P$ a ring, define the subgroup
$$aGL_\Lambda(P)=\{A\in \textup{aGL}_\Lambda(P):\ \text{there is }\Lambda'\in \Gamma(\Lambda) \text{ such that }A_{s,s}\neq 1\text{ then }s\in \Lambda'$$
$$\text{ and }A_{s_1,s_2}\neq 0 \text{ then }s_1, s_2\in \Lambda' \}\leqslant \textup{aGL}_\Lambda(P)$$
that is, the subgroup of $\textup{aGL}_\Lambda(P)$ accepting only finitely many non-zero coordinates. Notice that this group is normal and, when $\Lambda$ is infinite, it is a dense proper subgroup of $\textup{aGL}_{\Lambda}(P)$, hence not closed. It then follows that even though $\textup{aPSL}_\Lambda(P)$ is a simple topological group, it is not simple in the group theoretic sense.

\subsection{Matrix groups of finite fields and t.d.l.c.s.c. groups}
 
\begin{coro}
Let $F$ a finite field and $\Lambda$ is an infinite,\ irreducible, locally finite proset. Then the group $\textup{aPGL}_{\Lambda}(F)$ is a topologically simple,\ locally elliptic group.
\end{coro}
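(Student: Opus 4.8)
The statement has two parts, and the first is essentially in hand. Since $\Lambda$ is infinite and locally finite, Theorem \ref{theorem:simple matrices} applies verbatim to the finite field $F$ with its discrete topology, so $Z_\Lambda(F)$ is the unique largest proper closed normal subgroup and $\textup{aPGL}_\Lambda(F)=\textup{aGL}_\Lambda(F)/Z_\Lambda(F)$ is topologically simple; irreducibility of $\Lambda$ is not needed for this half. The real work is to establish local ellipticity, i.e.\ that every compact subset of $\textup{aPGL}_\Lambda(F)$ is contained in a compact subgroup.

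The plan is to exhibit $\textup{aGL}_\Lambda(F)$ as a directed union of compact open subgroups and then transport this structure to the quotient. First I would observe that for each finite $S\subset\Lambda$ the proset $\Lambda+S$ is again locally finite by Corollary \ref{prop:adding finite sets hence finite interval}, so $\textup{GL}_{\Lambda+S}(F)$ is a closed subgroup of $\prod_{(a,b)\in(\Lambda+S)^2}F$. Since $F$ is finite and discrete this product is compact and totally disconnected, hence $\textup{GL}_{\Lambda+S}(F)$ is profinite, in particular compact. By Proposition \ref{prop:conditions for atri to be elementary}, $\textup{aGL}_\Lambda(F)=\varinjlim_{S}\textup{GL}_{\Lambda+S}(F)$ is the directed union of these compact subgroups (directed because the orders associated to $S_1$ and $S_2$ are both suborders of the one associated to $S_1\cup S_2$), carrying the final topology.

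Granting that each $\textup{GL}_{\Lambda+S}(F)$ is open in $\textup{aGL}_\Lambda(F)$, local ellipticity follows quickly: a compact subset $K$ is covered by the directed family $\{\textup{GL}_{\Lambda+S}(F)\}_S$ of open sets, so by compactness and directedness $K\subseteq\textup{GL}_{\Lambda+S}(F)$ for a single $S$, placing $K$ inside a compact subgroup. To descend to $\textup{aPGL}_\Lambda(F)$, let $\pi$ be the quotient map by $Z_\Lambda(F)$; each $\pi(\textup{GL}_{\Lambda+S}(F))$ is a continuous image of a compact group, hence a compact subgroup, and these form a directed family of open subgroups covering the quotient, so the same covering argument shows every compact subset of $\textup{aPGL}_\Lambda(F)$ lies in a compact subgroup. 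Combined with the first paragraph, this yields the corollary.

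The step I expect to be the main obstacle is verifying that each $\textup{GL}_{\Lambda+S}(F)$ is genuinely open in the final topology, equivalently that it has finite index in every $\textup{GL}_{\Lambda+S'}(F)$ with $S\subseteq S'$. This reduces to checking that passing from the order associated to $S$ to the order associated to $S'$ creates only finitely many new admissible coordinate positions, so that $\textup{GL}_{\Lambda+S}(F)$ sits inside the profinite group $\textup{GL}_{\Lambda+S'}(F)$ as a closed finite-index, hence open, subgroup. One must confirm this finiteness for the finite subsets relevant to the definition of $\textup{aGL}_\Lambda(F)$; once openness is secured, the remaining compactness and quotient arguments are routine.
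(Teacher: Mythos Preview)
Your outline is the same as the paper's two-sentence proof: simplicity from Theorem~\ref{theorem:simple matrices}, and local ellipticity from Proposition~\ref{prop:conditions for atri to be elementary} together with the observation that each $\textup{GL}_{\Lambda+S}(F)$ is profinite when $F$ is finite. The paper does nothing beyond this; in particular it never discusses openness of the $\textup{GL}_{\Lambda+S}(F)$ inside the colimit.

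The extra step you single out as the main obstacle---showing each $\textup{GL}_{\Lambda+S}(F)$ has finite index in every $\textup{GL}_{\Lambda+S'}(F)$---is where your plan would actually break. Enlarging $S$ to $S'$ can create infinitely many new comparable pairs: take $\Lambda=\{c,a\}\cup\{b_i:i\in\mathbb{N}\}$ with $c<a<b_i$ for all $i$ (locally finite and irreducible) and $S'=\{c,b_0\}$; then in $\Lambda+S'$ one has $b_0\preceq_+ c<a<b_i$, hence $b_0\preceq_+ b_i$ for every $i$, so $\textup{GL}_\Lambda(F)$ sits with infinite index in $\textup{GL}_{\Lambda+S'}(F)$. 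Thus the finite-index/openness verification cannot be completed in the stated generality. The paper sidesteps this entirely by reading ``locally elliptic'' in the standard sense---every finite set of elements lies in a compact subgroup---which is immediate from the directed-union description (finitely many matrices lie in a common compact $\textup{GL}_{\Lambda+S}(F)$) and passes to the quotient by the closed subgroup $Z_\Lambda(F)$. Replace your compactness-cover argument with that direct observation and the proof goes through.
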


\begin{proof}
Theorem \ref{theorem:simple matrices} implies these groups are simple and Proposition \ref{prop:conditions for atri to be elementary} implies it is the countable union of profinite,\ second countable groups. Hence,\ the group is locally elliptic. 
\end{proof}

If for every $s\in \Lambda$ we have that $\mathcal{N}_1(s)$ is finite we say that $\Lambda$ has \textbf{finite neighborhoods}.\index{finite neighborhoods|ndx}

\begin{lemma}\index{quasi-discrete|ndx}
Let $\Lambda$ be an infinite irreducible proset and $P$ a finite commutative ring with discrete topology. If $\Lambda$ has finite neighborhoods then $\textup{QZ}(\textup{GL}_\Lambda(P))$ is dense in $\textup{GL}_\Lambda(P)$.
\end{lemma}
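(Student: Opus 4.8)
The plan is to produce a dense subset of $\textup{GL}_\Lambda(P)$ every element of which lies in $\textup{QZ}(\textup{GL}_\Lambda(P))$. Recall that the topology on $\textup{GL}_\Lambda(P)$ is the product topology inherited from $\prod_{(a,b)\in\Lambda\times\Lambda}P$, so, since $P$ is discrete, a basic open set around a matrix $A$ is cut out by prescribing the entries of $A$ on a finite set of coordinates (Remark \ref{remark:nets and open}). The dense family I would use is the set $K$ of \emph{finite block} matrices: those $B\in\textup{GL}_\Lambda(P)$ for which there is a finite $F\subseteq\Lambda$ with $B_{s_1,s_2}=\delta_{s_1,s_2}$ whenever $s_1\notin F$ or $s_2\notin F$; equivalently $B=I+E$ with $E$ supported on $F\times F$ and $B$ equal to the identity off the block $F$.

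First I would check that $K$ is dense. Given a nonempty basic open set that prescribes the entries of some $A\in\textup{GL}_\Lambda(P)$ on a finite coordinate set $C$, let $F$ be the closure under $\mathcal{N}_0$ of the finitely many indices occurring in $C$; this $F$ is finite (each class $\mathcal{N}_0(s)=[s,s]$ is finite) and is a union of equivalence classes. Define $B$ to agree with $A$ on $F\times F$ and to equal the identity elsewhere. Then $B$ agrees with $A$ on $C$. Because $F$ contains every class $[s,s]$ that meets it, the diagonal blocks of $B$ are either those of $A$ (invertible, since $A\in\textup{GL}_\Lambda(P)$) or the identity, so Proposition \ref{prop:conditions to have inverse on lambda} gives $B\in\textup{GL}_\Lambda(P)$. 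Hence $B\in K$ lies in the prescribed open set, proving density.

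Next I would show $K\subseteq\textup{QZ}(\textup{GL}_\Lambda(P))$. Fix $B=I+E\in K$ with $E$ supported on $F\times F$, and set $T:=\bigcup_{s\in F}\mathcal{N}_1(s)$, a finite set exactly because $\Lambda$ has finite neighborhoods. Let $V$ be the set of $g\in\textup{GL}_\Lambda(P)$ with $g_{s_1,s_2}=\delta_{s_1,s_2}$ for every coordinate with $s_1\in T$ or $s_2\in T$. Writing $Bg-gB=Eg-gE$ and using that $g$ is trivial on every row and column indexed by $F\subseteq T$, both $(Eg)_{s_1,s_2}$ and $(gE)_{s_1,s_2}$ collapse to $E_{s_1,s_2}$, so $Eg=gE$ and therefore $V\subseteq C_G(B)$. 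The crucial point is that $V$ is open: a constrained coordinate $(s_1,s_2)$ with $s_1\preceq s_2$ has $s_1\in T$ or $s_2\in T$, whence $\{s_1,s_2\}\subseteq\bigcup_{s\in F}\mathcal{N}_2(s)$, which is finite by finite neighborhoods; so $V$ prescribes only finitely many coordinates and is open. Since a subgroup containing an open neighbourhood of the identity is open, $C_G(B)$ is open and $B\in\textup{QZ}(\textup{GL}_\Lambda(P))$.

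Combining the two steps, $\textup{QZ}(\textup{GL}_\Lambda(P))$ contains the dense set $K$, hence is dense. The main obstacle, and the only place the hypothesis is genuinely used, is the openness of $C_G(B)$: without finite neighborhoods the set $T$ and the collection of coordinates that an element of $C_G(B)$ must fix would be infinite, and the centralizer would fail to be open. The block identity $Bg-gB=Eg-gE$ and the invertibility bookkeeping through Proposition \ref{prop:conditions to have inverse on lambda} are then routine.
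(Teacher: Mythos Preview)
Your proof is correct and follows essentially the same strategy as the paper: both arguments exhibit the family of ``finite block'' matrices (your $K$ is exactly the paper's $\bigcup_{S}G_S$), show each such matrix has open centralizer by producing an explicit open neighbourhood of the identity that commutes with it (you use the set $V$; the paper uses the normal subgroup $N^\Lambda_\alpha$ with $\alpha$ the convex closure of $\bigcup_{s\in S}\mathcal{N}_1(s)$), and then verify density. The only cosmetic difference is that you check density directly against basic open sets, while the paper appeals to the inverse-limit description via the surjections $\pi_\beta$ onto $\textup{GL}_\beta(P)$; both amount to the same observation.
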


\begin{proof}
Assume $\Lambda$ satisfies the conditions of the lemma. Let $S$ be a finite set of elements of $\Lambda$. Let $\alpha$ the minimal convex subset of $\Lambda$ containing $\cup_{s\in S}\mathcal{N}_1(s)$. As $\Lambda$ is irreducible,\ such subset exists. The union is a finite subset of $\Lambda$,\ hence so is $\alpha$. Define 
$$G_{S}=\{A\in \textup{GL}_\Lambda(F):\  \text{if }t\notin S,\ t'\in\Lambda \text{ then }A_{t,t}=1\ \text{and } A_{t,t'}=A_{t',t}=0\}\leqslant \textup{GL}_\Lambda(P)$$ 
Notice that for every $g\in G_{S}$ we have that 
$$N^{\Lambda}_{\alpha}\subset C_{\textup{GL}_\Lambda(P)}(g).$$
Hence,\ under the assumption on $\Lambda$ and $P$,\ $C_{\textup{GL}_\Lambda(P)}(g)$ is open. As this is the case for every $S$ finite subset of $\Lambda$,\ we have that:
$$H=\langle \cup_{S\subset_{\text{finite}} \Lambda} G_S\rangle\leqslant \textup{QZ}(\textup{GL}_\Lambda(P)).$$
It remains to show that $\textup{QZ}(\textup{GL}_\Lambda(P))$ is dense in $\textup{GL}_\Lambda(P)$. Notice that for every $\beta\in \Gamma(\Lambda)$,\ the map $\pi_{\beta}:\textup{GL}_{\Lambda}(P)\rightarrow \textup{GL}_{\beta}(P)$ given by the quotient of $\textup{GL}_\Lambda(P)$ under $N^\Lambda_\beta$ is such that $\pi_\beta(H)=\pi_{\beta}(G_\beta)=\textup{GL}_\beta(P)$. By the inverse limit property, it is then the case $\textup{GL}_\Lambda(P)=\overline{H}$.
\end{proof}

\begin{coro}\index{elementary group|ndx}
Let $\Lambda$ be a proset that has finite neighborhoods and $P$ be a finite commutative ring with discrete topology. If $G$ is a t.d.l.c.s.c. group such that $\textup{GL}_\Lambda(P)\leqslant G$ is an open,\ compact subgroup,\ then $G$ is an elementary group with $\xi(G)\leqslant 3$.
\end{coro}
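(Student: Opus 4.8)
The plan is to combine the density result just proven with Proposition \ref{prop:rank quasicentre}. The previous lemma establishes that, under the stated hypotheses on $\Lambda$ and $P$, the quasicentre $\textup{QZ}(\textup{GL}_\Lambda(P))$ is dense in $\textup{GL}_\Lambda(P)$. Proposition \ref{prop:rank quasicentre} states precisely that if a t.d.l.c.s.c.\ group $G$ admits an open compact subgroup $U$ with $\textup{QZ}(U)$ dense in $U$, then $G$ is elementary with $\xi(G)\leqslant 3$. So the corollary will follow almost immediately once we take $U=\textup{GL}_\Lambda(P)$ and verify the remaining hypotheses.

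First I would check that $U=\textup{GL}_\Lambda(P)$ genuinely satisfies the density condition of Proposition \ref{prop:rank quasicentre}. The subtlety is that the proposition requires $\textup{QZ}(U)$ to be dense in $U$ itself, whereas the lemma computes the quasicentre of $\textup{GL}_\Lambda(P)$ as a standalone group; since $U$ is open in $G$, the subspace topology on $U$ agrees with its intrinsic topology, and an element whose $U$-centralizer is open in $U$ also has open $G$-centralizer (an open subgroup of an open subgroup of $G$ is open in $G$). Thus $\textup{QZ}(U)$ computed intrinsically coincides with $\textup{QZ}(U)$ as a subset relevant to $G$, and the lemma's conclusion transfers directly. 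I should also note the mild hypothesis gap: the lemma is stated for $\Lambda$ \emph{irreducible}, while the corollary drops irreducibility. The fix is to observe that the only use of irreducibility in the lemma was to guarantee that the minimal convex set containing $\cup_{s\in S}\mathcal{N}_1(s)$ exists and is finite; under the finite-neighborhoods hypothesis each $\mathcal{N}_1(s)$ is finite, and one can work component by component using Proposition \ref{prop:making prosets into disjoint parts}, so the density argument goes through verbatim on each irreducible component and hence on the whole group.

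Next I would confirm the ambient hypotheses of Proposition \ref{prop:rank quasicentre}: $G$ is assumed t.d.l.c.s.c.\ in the statement, and $U=\textup{GL}_\Lambda(P)$ is assumed open and compact in $G$, which is exactly what the proposition needs. With $U$ open and compact and $\textup{QZ}(U)$ dense in $U$, Proposition \ref{prop:rank quasicentre} yields that $G$ is elementary with $\xi(G)\leqslant 3$, which is the assertion.

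The main obstacle, such as it is, is purely bookkeeping rather than mathematical depth: reconciling the irreducibility hypothesis of the lemma with its absence in the corollary, and making sure the quasicentre computed in the lemma is the quasicentre relevant to the open subgroup $U$ inside $G$. Both are handled by the openness of $U$ in $G$ (so centralizers open in $U$ are open in $G$) and by decomposing $\Lambda$ into irreducible components via Proposition \ref{prop:making prosets into disjoint parts}; no genuinely new estimate is required, and the bound $\xi(G)\leqslant 3$ is inherited wholesale from Proposition \ref{prop:rank quasicentre}.
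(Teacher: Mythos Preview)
Your proposal is correct and follows essentially the same route as the paper: invoke the preceding density lemma to get $\textup{QZ}(U)$ dense in $U=\textup{GL}_\Lambda(P)$, then apply Proposition~\ref{prop:rank quasicentre}. The paper's proof is just the two-line version of this, noting $\textup{QZ}(\textup{GL}_\Lambda(P))\leqslant \textup{QZ}(G)$ and then citing the proposition; in fact you are more careful than the paper in flagging and patching the irreducibility hypothesis of the lemma, which the paper's proof silently ignores.
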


\begin{proof}
Notice that $\textup{QZ}(\textup{GL}_\Lambda(P))\leqslant \textup{QZ}(G)$. Hence $\textup{GL}_\Lambda(P)\leqslant\overline{\textup{QZ}(G)}$. By Proposition \ref{prop:rank quasicentre} it follows that $G$ is elementary and $\xi(G)\leqslant 3$.
\end{proof}

A preordered set $\Lambda$ is \textbf{n-bounded} if for every $s\in \Lambda$, the set $\mathcal{N}_1(s)$ has at most $n$ elements.

\begin{lemma}\index{solvable group|ndx}
Assume $\Lambda$ is a partially ordered set and $P$ is a commutative, absolute topological ring. Then the group $\textup{GL}_\Lambda(P)$ is 
residually solvable. If $\Lambda$ is $n$-bounded then $\textup{GL}_\Lambda(P)$ is solvable and $\textup{GL}_\Lambda(P)^{(n)}=\{1\}$.
\end{lemma}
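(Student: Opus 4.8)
The plan is to use the poset structure to peel off an abelian quotient and then control the derived series of the remaining unipotent part by a filtration indexed by chain–length. Throughout I use that $\Lambda$ is locally finite (this is implicit, since $\textup{GL}_\Lambda(P)$ is only defined in that case) and that, because $\Lambda$ is a poset, $[s,s]=\{s\}$ for every $s$.

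First I would extract the abelian quotient. Since $[s,s]=\{s\}$, the multiplication formula gives $(AB)_{s,s}=A_{s,s}B_{s,s}$, so the diagonal map $\delta\colon \textup{GL}_\Lambda(P)\to \prod_{s\in\Lambda}P^{*}$, $A\mapsto (A_{s,s})_{s}$, is a continuous surjective homomorphism onto an abelian group, with closed kernel $U=\{A\in\textup{GL}_\Lambda(P):A_{s,s}=1 \text{ for all } s\}$. Hence $\textup{GL}_\Lambda(P)/U$ is abelian and $\textup{GL}_\Lambda(P)^{(1)}=\overline{[\textup{GL}_\Lambda(P),\textup{GL}_\Lambda(P)]}\subseteq U$. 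Note that every $A=I+N\in U$ is genuinely invertible in $\textup{M}_\Lambda(P)$ with $A^{-1}=\sum_{m\geqslant 0}(-N)^{m}$, the sum being finite in each coordinate because local finiteness bounds the length of chains inside each interval $[s,t]$.

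Next I would build the filtration. For $s\preceq t$ let $d(s,t)$ be the length of the longest strict chain $s\prec u_1\prec\cdots\prec t$; local finiteness makes $d(s,t)$ finite, and concatenating longest chains gives the superadditivity $d(s,t)\geqslant d(s,u)+d(u,t)$ whenever $s\preceq u\preceq t$. Define, for $k\geqslant 1$,
$$U_k=\{A=I+N\in U:\ N_{s,t}=0 \text{ unless } d(s,t)\geqslant k\},$$
so that $U_1=U$. Each $U_k$ is closed (vanishing of coordinates is a closed condition in the Hausdorff ring $P$) and is a subgroup: superadditivity shows products $NM$ raise $d$ additively, so $U_k$ is closed under multiplication and under the geometric–series inverse above. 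The technical heart is the estimate $[U_j,U_k]\subseteq U_{j+k}$, which I would prove from the ring identity $ABA^{-1}B^{-1}-I=(AB-BA)A^{-1}B^{-1}=[N,M]A^{-1}B^{-1}$ for $A=I+N\in U_j$, $B=I+M\in U_k$: the ring commutator $[N,M]=NM-MN$ lies in the coordinates with $d\geqslant j+k$ by superadditivity, and multiplying by the upper-triangular matrix $A^{-1}B^{-1}$ preserves this support, again by superadditivity. This commutator bound is where I expect the main difficulty: one must handle infinite-matrix inverses correctly and verify that the chain-length function behaves like a ring filtration.

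Finally I would assemble the conclusion. An induction using $[U_m,U_m]\subseteq U_{2m}\subseteq U_{m+1}$ (valid since $U_{2m}$ is closed and $2m\geqslant m+1$) upgrades $\textup{GL}_\Lambda(P)^{(1)}\subseteq U_1$ to $\textup{GL}_\Lambda(P)^{(m)}\subseteq U_m$ for every $m\geqslant 1$. For residual solvability, local finiteness forces $d(s,t)<\infty$ for all $s,t$, so any $A=I+N\in\bigcap_m U_m$ has $N=0$; thus $\bigcap_m U_m=\{I\}$ and $\bigcap_m \textup{GL}_\Lambda(P)^{(m)}=\{1\}$. If moreover $\Lambda$ is $n$-bounded, then every strict chain $s=u_0\prec\cdots\prec u_d=t$ lies in $\mathcal{N}_1(s)$, so $d+1\leqslant n$ and hence $d(s,t)\leqslant n-1$ for all $s,t$; consequently $U_n=\{I\}$ and $\textup{GL}_\Lambda(P)^{(n)}\subseteq U_n=\{1\}$, so the group is solvable with $\textup{GL}_\Lambda(P)^{(n)}=\{1\}$.
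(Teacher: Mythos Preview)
Your proof is correct. Both your argument and the paper's rest on the same underlying principle---a filtration of the group by a depth parameter on the poset, with the derived series dropping through successive levels---but the executions differ in a way worth noting. The paper filters by the cardinality $|[s_1,s_2]|$ of intervals and argues directly, coordinate by coordinate, that if $A,B\in\textup{GL}_\Lambda(P)^{(k)}$ already agree with the identity on intervals of size at most $k$, then $[A,B]$ agrees with the identity on intervals of size at most $k+1$; the diagonal case is absorbed into the base of the induction rather than split off. You instead first separate the abelian diagonal quotient, then filter the unipotent kernel $U$ by the longest-chain length $d(s,t)$ and establish the sharper estimate $[U_j,U_k]\subseteq U_{j+k}$ via the ring identity $ABA^{-1}B^{-1}-I=[N,M]\,A^{-1}B^{-1}$ together with superadditivity of $d$. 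Your route is essentially the standard treatment of a unipotent group through its associated filtered ring; it gives a stronger descent (indeed $\textup{GL}_\Lambda(P)^{(m)}\subseteq U_{2^{m-1}}$) and replaces the paper's coordinate bookkeeping in the inductive step by a one-line algebraic identity. The paper's interval-size filtration is marginally coarser but requires no preliminary splitting; both reach the $n$-bounded conclusion from the same observation that any chain from $s$ lies in $\mathcal{N}_1(s)$.
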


\begin{proof}
Let $s_1\leqslant s_2$ be two elements of $\Lambda$. We will prove by induction that for every $A,\ B\in \textup{GL}_\Lambda(P)^{(k)}$, if $[s_1,\ s_2]$ has less that $k+1$ elements, then 
$$[A,\ B]_{s_1,s_2}=\left\{\begin{array}{ll}
1 & \mbox{if }s_1=s_2,\\
0 & \mbox{otherwise.}
\end{array}\right.$$
By Remark \ref{remark:nets and open} it then follows that for every $C\in \overline{[\textup{GL}_{\Lambda}(P)^{(k)},\ \textup{GL}_{\Lambda}(P)^{(k)}]}=\textup{GL}_{\Lambda
}(P)^{(k+1)}$ and $s_1,\ s_2\in \Lambda$ such that $[s_1,\ s_2]$ has at most $k+1$ elements, then:
$$C_{s_1,s_2}=\left\{\begin{array}{ll}
1 & \mbox{if }s_1=s_2,\\
0 & \mbox{otherwise.}
\end{array}\right.$$
\indent For the case $k=1$, let $A,\ B\in \textup{GL}_{\Lambda}(P)$ and $s\in \Lambda$. As $P$ is commutative it follows that 
$$[A,\ B]_{s_1,s_1}=A_{s_1,s_1}B_{s_1,s_1}A^{-1}_{s_1,s_1}B^{-1}_{s_1,s_1}=1,$$
proving the base case.\\
\indent Assume it is true for all $k\leqslant m$.  Let $k=m+1$ and $s_1,\ s_2\in\Lambda$ be such that $[s_1,\ s_2]$ has $m+1$ elements. Let $A,\ B\in \textup{GL}_\Lambda(P)^{(m)}$. The induction hypothesis implies that for every  $s_1<s_3$ we have $A_{s_1,s_3}=B_{s_1,s_3}=A^{-1}_{s_1,s_3}=B^{-1}_{s_1,s_3}=0$, for every $s_3<s_2$ we have $A_{s_3,s_2}=B_{s_3,s_2}=A^{-1}_{s_3,s_2}=B^{-1}_{s_3,s_2}=0$ and $A_{s_1,s_2}=-A^{-1}_{s_1,s_2}$,  $B_{s_1,s_2}=-B^{-1}_{s_1,s_2}$. Hence:
$$[A,B]_{s_1,\ s_2}=A_{s_1,s_2}+B_{s_1,s_2}-A_{s_1,s_2}-B_{s_1,s_2}=0,$$
proving the induction. It then follows that if $A\in \bigcap_{n\in\mathbb{N}}\textup{GL}_{\Lambda}(P)^{(n)}$, then $A=1^{\Lambda}$, that is, $\textup{GL}_{\Lambda}(P)$ is residually solvable.\\
\indent Assume now that the partially ordered set is $n$-bounded. It then follows that for every element $A$ of $\textup{GL}_{\Lambda}(P)^{(n)}$, $A=1^{\Lambda}$. Hence, for this case, $\textup{GL}_{\Lambda}(P)$ is solvable.
\end{proof}

With Theorem \ref{thrm:locally solvable is elementary} we get the following result:

\begin{coro}\index{elementary group|ndx}
Assume $\Lambda$ is an n-bounded partial ordered set and $P$ is a commutative, profinite, second countable ring. If $G$ is a t.d.l.c.s.c. group such that $\textup{GL}_{\Lambda}(P)\leqslant G$ is an open, compact subgroup, then $G$ is an elementary group with $\xi(G)<\omega$.
\end{coro}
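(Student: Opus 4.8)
The plan is to reduce the statement directly to Theorem~\ref{thrm:locally solvable is elementary}, which asserts that a t.d.l.c.s.c.\ group possessing an open, compact, \emph{solvable} subgroup is elementary of finite rank. Since $\textup{GL}_\Lambda(P)$ is assumed to be an open, compact subgroup of $G$ and $G$ is t.d.l.c.s.c., the entire content of the argument is to verify that $\textup{GL}_\Lambda(P)$ is solvable in the sense of the closed derived series; once this is established, the cited theorem applies verbatim with $U=\textup{GL}_\Lambda(P)$.

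The preceding Lemma delivers exactly this solvability, but under the hypothesis that $P$ is a commutative \emph{absolute} topological ring, whereas here $P$ is only assumed commutative, profinite and second countable. So the first step I would take is to check that a profinite ring is automatically an absolute topological ring in the sense of Definition~\ref{thrm:giving topology to unit group}, i.e.\ that inversion is continuous on $P^*$ with the subspace topology. Writing $P=\varprojlim_i P_i$ as an inverse limit of finite discrete rings (with countable index system, using second countability), one checks that $u\in P$ is a unit precisely when each projection $u_i\in P_i$ is a unit: if $u$ is invertible the images $u_i$ are invertible, and conversely the inverses $u_i^{-1}$ are compatible under the transition maps, because ring homomorphisms preserve inverses of units, so they assemble to an inverse of $u$. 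Thus $P^*=\bigcap_i\pi_i^{-1}(P_i^*)$ is closed, and inversion is continuous coordinatewise, hence continuous on $P^*$ by the net characterisation in Remark~\ref{remark:nets and open}. Therefore $P$ is a commutative absolute topological ring and the preceding Lemma is applicable.

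With the Lemma available, the assumption that $\Lambda$ is $n$-bounded yields $\textup{GL}_\Lambda(P)^{(n)}=\{1\}$, so $\textup{GL}_\Lambda(P)$ is solvable with respect to the closed derived series used in Theorem~\ref{thrm:locally solvable is elementary}. Taking $U=\textup{GL}_\Lambda(P)$ as the required open, compact, solvable subgroup of the t.d.l.c.s.c.\ group $G$ and invoking that theorem then gives that $G$ is elementary with $\xi(G)<\omega$, which completes the argument.

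I expect the only genuine obstacle to be the bridging step of the second paragraph: confirming that the group of units of a profinite ring is honestly a topological group, equivalently that profinite rings are absolute topological rings. This is the sole hypothesis mismatch between the corollary and the Lemma on which it rests; everything else is a direct citation of Theorem~\ref{thrm:locally solvable is elementary} applied to the open compact subgroup supplied by the hypotheses.
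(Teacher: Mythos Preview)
Your approach is exactly the paper's: the corollary is stated immediately after the solvability Lemma with only the sentence ``With Theorem~\ref{thrm:locally solvable is elementary} we get the following result,'' so the intended proof is precisely to feed the solvable open compact subgroup $\textup{GL}_\Lambda(P)$ into that theorem. You actually go further than the paper by explicitly bridging the hypothesis mismatch (profinite versus absolute topological ring) that the paper leaves implicit; your inverse-limit argument for continuity of inversion on $P^*$ is correct and fills that gap cleanly.
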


\subsection{Relation to the matrix groups defined in \texorpdfstring{\cite{simple_matrices}}{TEXT}}

The next definition and result comes from \cite{simple_matrices}. There Groenhout,\ Willis and Reid use the definition of being $\mathbb{Z}$-like to prove some results on the unit group of these matrices. The following definition and result will translate it to the terms used here.

\begin{defi}\cite[Definition 2.1]{simple_matrices}
Fix a preordered set $(\Lambda,\  \preceq)$. A subset $\Lambda'\subset \Lambda$ is \textbf{strongly convex}\index{strongly convex|ndx} if for all $s\in \Lambda\backslash \Lambda'$,\ either $s  \precnapprox t$ for all $t\in \Lambda'$. or else $t  \precnapprox s$ for all $t\in \Lambda'$. A proset $(\Lambda,\  \preceq)$ is said to be $\mathbb{Z}$-like if every finite subset of $\Lambda$ is contained in a finite strongly convex subset of $\Lambda$.
\end{defi}

\begin{lemma}
Let $(\Lambda,\  \preceq)$ be a irreducible proset. Then $\Lambda$ is $\mathbb{Z}$-like if,\ and only if,\ it is locally finite and for every $s\in \Lambda$ we have $\mathcal{N}_1(s)=\Lambda$.
\end{lemma}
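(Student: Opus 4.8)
The plan is to first reformulate the target. The condition ``$\mathcal{N}_1(s)=\Lambda$ for every $s$'' is exactly the statement that $\preceq$ is a total preorder, i.e.\ any two elements are comparable, so the lemma asserts that for irreducible $\Lambda$ being $\mathbb{Z}$-like is equivalent to being a locally finite total preorder. Throughout I would read the strict inequalities in the definition of strong convexity as $\precnsim$ (strictly below/above \emph{in the preorder}), since it is this reading under which a strongly convex set is automatically convex: if $x\preceq z\preceq y$ with $x,y\in\Lambda'$ and $z\notin\Lambda'$, strong convexity forces $z\precnsim x$ or $y\precnsim z$, each contradicting $x\preceq z\preceq y$. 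Under the literal $\precnapprox$ reading this already fails — e.g.\ an infinite all-equivalent proset $\overline{S}$ is $\mathbb{Z}$-like yet not locally finite — so the reading genuinely matters. This convexity observation is what links strong convexity to local finiteness.

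For the direction ``total $+$ locally finite $\Rightarrow$ $\mathbb{Z}$-like'', I would take a finite $S\subseteq\Lambda$ and use totality together with finiteness to pick $a,b\in S$ with $a\preceq s\preceq b$ for all $s\in S$, so that $S\subseteq[a,b]$ and $[a,b]$ is finite by local finiteness. It then remains to check $[a,b]$ is strongly convex: given $s\notin[a,b]$, totality makes $s$ comparable to $a$ and to $b$, and a short case analysis — using transitivity to rule out $s$ being equivalent to an interior point — shows $s\precnsim t$ for all $t\in[a,b]$, or else $t\precnsim s$ for all $t\in[a,b]$. This direction I expect to be routine.

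For the converse, local finiteness is immediate from the opening observation: given $s_1\preceq s_2$, choose a finite strongly convex $\Lambda'\supseteq\{s_1,s_2\}$; since $\Lambda'$ is convex, $[s_1,s_2]\subseteq\Lambda'$ is finite. The real content, and the step I expect to carry essentially all the difficulty, is showing that $\mathbb{Z}$-likeness forces every pair to be comparable. Here I would argue by contradiction: if $s,s'$ are incomparable, irreducibility (Proposition \ref{prop:making prosets into disjoint parts}, which gives $\mathcal{N}_\omega(s)=\Lambda$) yields a path of comparabilities from $s$ to $s'$; taking a shortest such path reduces the problem to three elements $a,b,c$ with $a,c$ both comparable to $b$ but $a,c$ incomparable, i.e.\ a ``V'' ($a,c\preceq b$) or its dual. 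I would then embed $\{a,b,c\}$ in a finite strongly convex $\Lambda'$ and try to exploit a boundary element $x\notin\Lambda'$ that is comparable to $\Lambda'$ — which must exist when $\Lambda$ is infinite, again by irreducibility — to push the two branches of the V onto a common side and thereby force $a,c$ comparable.

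This last step is delicate and is where I would concentrate my scrutiny. Strong convexity constrains only the position of \emph{outside} elements relative to $\Lambda'$, not the internal order of $\Lambda'$, so a finite strongly convex set can a priori contain an incomparable ``parallel'' pair; closing the argument therefore seems to need either that $\Lambda$ be infinite (the finite case being genuinely different, since there the whole set is vacuously strongly convex and comparability can fail) or a sharpening of how strong convexity interacts with such parallel elements. Accordingly, before committing to a write-up I would verify carefully whether the stated hypotheses really suffice for the comparability claim, or whether the intended notion of strong convexity is stronger than the literal one; this is the main obstacle, with local finiteness and the easy direction being comparatively straightforward.
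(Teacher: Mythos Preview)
Your plan for the direction ``total $+$ locally finite $\Rightarrow$ $\mathbb{Z}$-like'' and for extracting local finiteness from $\mathbb{Z}$-likeness matches the paper's argument exactly: the paper also takes the interval $[s_1,s_n]$ between a minimum and maximum of a given finite set and checks it is strongly convex, and it deduces local finiteness from the observation that a finite strongly convex set containing $s\preceq t$ must contain $[s,t]$.

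Your caution about the remaining step --- deducing comparability of every pair from $\mathbb{Z}$-likeness --- is entirely warranted, and the obstacle you anticipate is in fact fatal. The paper's proof of this step is the single sentence ``Given $s\neq t\in\Lambda$ it then follows that $s\preceq t$ or $t\preceq s$'', with no further justification; but the assertion is false. Take $\Lambda=\{a,b\}\cup\{c_n:n\in\mathbb{N}\}$, a \emph{poset} (so your $\precnapprox$/$\precnsim$ distinction does not arise), with $a,b$ incomparable, $c_n<c_m$ for $n<m$, and $a,b<c_n$ for every $n$. This is irreducible (both $a$ and $b$ lie below $c_0$), infinite, locally finite, and $\mathbb{Z}$-like: given a finite $S$, choose $N$ with $S\subseteq\{a,b,c_0,\ldots,c_N\}=:\Lambda'$; every $c_m$ with $m>N$ lies strictly above all of $\Lambda'$, so $\Lambda'$ is strongly convex. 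Yet $\mathcal{N}_1(a)=\Lambda\setminus\{b\}\neq\Lambda$. Your finite observation is likewise correct and already yields smaller counterexamples (any finite irreducible poset with an incomparable pair, e.g.\ the ``V'' $a,b<c$, is vacuously $\mathbb{Z}$-like). So neither your V-configuration strategy nor any other argument can close the gap: the lemma as stated is incorrect, and the paper simply asserts, without proof, precisely the step you singled out as suspect.
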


\begin{proof}
Assume $\Lambda$ is $\mathbb{Z}$-like. Given $s\neq t\in \Lambda$ it then follows that or $s  \preceq t$ or $t  \preceq s$,\ hence $\mathcal{N}_1(s)=\Lambda$ for every $s\in \Lambda$. If there was $s  \preceq t$ such that $[s,t]$ is infinite then there would be no strongly convex finite set containing $s$ and $t$,\ hence $\Lambda$ is locally finite.\\
\indent Assume now $\Lambda$ is locally finite and for all $s\in \Lambda$ we have $\mathcal{N}_1(s)=\Lambda$. Let $s_1,\ s_2, \ldots,\ s_n\in \Lambda$. Under our assumption,\ without loss of generality we can assume $s_1  \preceq s_2  \preceq\ldots  \preceq s_n$. Observe that $\{s_1,\ s_2, \ldots,\ s_n\}\subset [s_1,\ s_n]$,\ and this set is finite by assumption. Let $t\in \Lambda\backslash[s_1,\ s_n]$. By the assumption it is also true that $t  \preceq s_1$,\ $s_1  \preceq t  \preceq s_n$ or $s_n  \preceq t$. Because $t\notin [s_1,\ s_n]$ it is clear that $s_1  \preceq t  \preceq s_n$ is not possible and,\ for every $s\in[s_1,\ s_n]$,\ $s\nsim t$. Hence $t  \precnapprox s_1$ or $s_n  \precnapprox  t$ and $[s_1,\ s_n]$ is $\mathbb{Z}$-like.
\end{proof}

\textbf{Acknowledgements:}\ This article is part of my PhD thesis written under the supervision of George Willis, Colin Reid, and Stephan Tornier. I would like to thank them for advising me while writing the article and proofreading it.\\
\indent \textbf{Conflict of interest} The author declares that there are no conflict of interest.

\bibliography{Bibliography.bib}{}
\bibliographystyle{alpha}

\end{document}